\documentclass[10 pt, letterpaper]{article}
\usepackage[T1]{fontenc}
\usepackage{times}

\oddsidemargin 0.7cm
\topmargin -1.5cm
\textheight 22cm
\textwidth 15cm


\usepackage{xspace,eurosym,verbatim}
\usepackage{amsmath,amssymb,bm,upgreek,theorem}
\usepackage{float,graphicx,epsfig,pstricks,pst-node,subfig}
\usepackage{ifpdf,cite}
\usepackage{multirow}

\psset{unit=1mm,framesep=10pt,fillstyle=none}  
\degrees[360]                                  
\psset{linewidth=0.9pt}                        
\providecommand{\thl}{\psset{linewidth=0.3pt}} 
\providecommand{\mel}{\psset{linewidth=0.5pt}} 
\providecommand{\stl}{\psset{linewidth=0.9pt}} 
\providecommand{\bol}{\psset{linewidth=1.4pt}} 


\numberwithin{figure}{section}
\numberwithin{table}{section}
\numberwithin{equation}{section}

\newtheorem{theorem}{Theorem}[section]

\newtheorem{lemma}[theorem]{Lemma}

\newtheorem{proposition}[theorem]{Proposition}

\newtheorem{definition}[theorem]{Definition}
\newtheorem{remark}[theorem]{Remark}
\newtheorem{assumption}[theorem]{Assumption}
\newtheorem{example}[theorem]{Example}
\newenvironment{proof}{\vspace{1.5ex}\textit{Proof.}\rmfamily}{\hfill$\square$\vspace{1.5ex}}



 %
\providecommand{\eg}{e.g.\@\xspace} %
\providecommand{\ie}{i.e.\@\xspace} %
\providecommand{\cf}{cf.\@\xspace} %
\providecommand{\st}{\textnormal{s.t.}}
\providecommand{\pst}{\phantom{\textnormal{s.t.}}}
\providecommand{\iid}{i.i.d.\@\xspace} %
\providecommand{\ef}{\enspace.}
\providecommand{\ec}{\enspace,}
\providecommand{\esp}{\enspace}

\providecommand{\fa}{\forall\,\,} %
\providecommand{\tp}{^{\textrm{T}}} %
\providecommand{\to}{\rightarrow}

\providecommand{\xo}{x^{\star}} %
\providecommand{\bxo}{\bar{x}^{\star}} %
\providecommand{\ep}{\varepsilon} %
\providecommand{\de}{\delta} %
\providecommand{\bde}{\bar{\delta}} %
\providecommand{\den}{\delta} %
\providecommand{\om}{\omega} %
\providecommand{\bom}{\bar{\omega}} %
\providecommand{\tom}{\tilde{\omega}} %
\providecommand{\Om}{\Delta^{K}} %
\providecommand{\Omi}{\Delta^{K_{i}}} %
\providecommand{\tOmi}{\Delta^{K_{i}-R_{i}}} %
\providecommand{\sd}{\zeta} %
\providecommand{\sdi}{\zeta_{i}} %
\providecommand{\sr}{\rho} %
\providecommand{\sri}{\rho_{i}} %
\providecommand{\V}{V}
\providecommand{\Vb}{\bar{V}}

\DeclareMathOperator{\esssup}{ess\,sup} %
\DeclareMathOperator{\argmin}{arg\,min} %
\DeclareMathOperator{\conv}{conv} %
\DeclareMathOperator{\rnk}{rank}
\DeclareMathOperator{\spn}{span}

\DeclareMathOperator{\Sc}{Sc}

\DeclareMathOperator{\bSc}{\overline{\Sc}}

\DeclareMathOperator{\B}{B}
\DeclareMathOperator{\di}{d}

\providecommand{\CA}{\mathcal{A}}
\providecommand{\CL}{\mathcal{L}}

\providecommand{\BN}{\mathbb{N}}
\providecommand{\BR}{\mathbb{R}}
\providecommand{\bBR}{\overline{\mathbb{R}}}
\providecommand{\BRn}{\mathbb{R}^{n}}
\providecommand{\BRd}{\mathbb{R}^{d}}
\providecommand{\BX}{\mathbb{X}}
\providecommand{\BZ}{\mathbb{Z}}

\DeclareMathOperator{\SCP}{SCP}
\DeclareMathOperator{\MCP}{MCP}
\DeclareMathOperator{\MSP}{MSP}
\providecommand{\DSP}{\overline{\MSP}}
\providecommand{\Pb}{\textstyle{\Pr}}



\title{Randomized Solutions to Convex Programs with Multiple Chance Constraints\thanks{This
manuscript is the preprint of a paper submitted to the SIAM Journal on Optimization and it is
subject to SIAM copyright. SIAM maintains the sole rights of distribution or publication of the work 
in all forms and media. 
If accepted, the copy of record will be available at http://www.siam.org.}}


\author{Georg Schildbach\thanks{Automatic Control Laboratory, Swiss Federal Institute
    of Technology, Zurich, Switzerland
    ({\tt\footnotesize schildbach|morari@control.ee.ethz.ch}).}
    \and Lorenzo Fagiano\thanks{Dipartimento di Automatica e Informatica, Polytecnico di Torino,
    Torino, Italy and Department of Mechanical Engineering, University of California at Santa
    Barbara, Santa Barbara (CA), United States
    ({\tt\footnotesize lorenzo.fagiano@polito.it}).}
    \and Manfred Morari$^{\dagger}$
}


\begin{document}

\maketitle

\begin{abstract}
The scenario-based optimization approach (`scenario approach') provides an intuitive way of
approximating the solution to chance-constrained optimization programs, based on finding the
optimal solution under a finite number of sampled outcomes of the uncertainty (`scenarios').
A key merit of this approach is that it neither requires explicit knowledge of the uncertainty set,
as in robust optimization, nor of its probability distribution, as in stochastic optimization.
The scenario approach is also computationally efficient because it only requires the solution to a
convex optimization program, even if the original chance-constrained problem is non-convex.
Recent research has obtained a rigorous foundation for the scenario approach, by establishing
a direct link between the number of scenarios and bounds on the constraint violation probability.
These bounds are tight in the general case of an uncertain optimization problem with a single
chance constraint.

This paper shows that the bounds can be improved in situations where the chance constraints have a
limited `support rank', meaning that they leave a linear subspace unconstrained.
Moreover, it shows that also a combination of multiple chance constraints, each with individual
probability level, is admissible.
As a consequence of these results, the number of scenarios can be reduced from that prescribed by
the existing theory for problems with the indicated structural property.
This leads to an improvement in the objective value and a reduction in the computational complexity
of the scenario approach.
The proposed extensions have many practical applications, in particular high-dimensional problems such as multi-stage uncertain decision problems or design problems of large-scale systems.\end{abstract}

\noindent\textbf{Key words:} 
Uncertain Optimization, Chance Constraints, Randomized Methods, Convex Optimization,
Scenario Approach, Multi-Stage Decision Problems.

\section{Introduction}\label{Sec:Intro} 

Optimization is ubiquitous in modern problems found in engineering, logistics, and other sciences. A
common pattern is that a decision or design variable $x\in\BRd$ has to be selected from a subset of
$\BRd$, as described by constraints $f_i:\BRd\to\BR$, and its quality is measured against some
objective or cost function $f_0:\BRd\to\BR$:
\begin{subequations}\label{Equ:OptProb}\begin{align}
    \min_{x\in\BRd}\quad &f_0(x)\ec\\
    \st\quad&f_{i}(x)\leq 0\qquad\fa i=1,2,\hdots,N\ef
\end{align}\end{subequations}

\subsection{Chance-Constrained Optimization}

Unfortunately, in many practical applications the underlying problem data is uncertain.
This uncertainty shall be represented with an abstract variable $\den\in\Delta$, where $\Delta$
is an uncertainty set whose nature is not specified.
The uncertainty may affect the objective function $f_0$ and/or the constraints $f_i$. Thus for a
particular decision $x$ it becomes uncertain what objective value is achieved and/or whether
the constraints are indeed satisfied.
The second situation represents a particular challenge, as good solutions are usually located on
the boundary of the feasible set.

This gives rise to a trade-off problem between the (uncertain) objective value and the robustness
of the chosen decision to a constraint violation.
A large variety of approaches addressing this issue have been proposed in the areas of robust and
stochastic optimization \cite{BaiEtAl:1997,BenTalNem:1998,BirgLouv:1997,KallMay:2011,KouvYu:1997,
MulVan:1995,Prekopa:1995,Shapiro:2009}, with the preferred method of choice depending on the
requirements of the application at hand.

In many practical applications, $\den$ can be assumed to be of a stochastic nature. In this case,
the formulation of \emph{chance constraints}, where the decision variable $x$ has to be feasible
with a least probability $(1-\ep)$ for $\ep\in(0,1)$, has proven to be an appropriate concept
for handling the uncertainty in the constraints.
However, chance-constrained optimization problems are usually very difficult to solve.
The \emph{scenario approach}, as explained below, represents an attractive method for finding an
`approximate solution' to stochastic programs, since it is both intuitive and computationally
efficient.

\subsection{The Scenario Approach}\label{Sec:SCP}

Recent contributions \cite{Cala:2009,CalaCamp:2005,Cala:2010,CampGar:2008,CampGar:2011} have
revealed the theoretical links between the scenario approach and the solution to an optimization
problem with a linear objective function and a single chance constraint ($\SCP$):
\begin{subequations}\label{Equ:SCP}\begin{align}
    \min_{x\in\BX}\quad &c\tp x\ec\\
    \st\quad&\Pr\bigl[f(x,\den)\leq 0\bigr]\geq(1-\ep)\ef
\end{align}\end{subequations}
Here $\BX\subset\BRd$ is a compact and convex set, $c\tp$ denotes the transpose of a vector $c\in
\BRd$, $\Pr[\cdot]$ is the probability measure on the uncertainty set $\Delta$, $f:\BRd\times\Delta
\to\BR$ is a convex function in its first argument $x\in\BRd$ for $\Pr$-almost every uncertainty
$\den\in\Delta$, and $\ep$ is some value in the open real interval $(0,1)$.

The chance constraint (\ref{Equ:SCP}b) is interpreted as follows. For any given $x\in\BRd$, the
left-hand side represents the probability of the event that $x$ indeed belongs to the feasible set.
Written more properly,
\begin{equation}\label{Equ:ProbMeas}
	\Pr\bigl[f(x,\den)\leq 0\bigr]:=\Pr\bigl\{\den\in\Delta\:\big|\:f(x,\den)\leq 0\bigr\}\ec
\end{equation}
however the left-hand side notation is kept throughout for brevity.
Note that $x$ is considered to be a \emph{feasible point} of the chance constraint (\ref{Equ:SCP}b)
if this probability is at least $(1-\ep)$.

\vspace*{0.15cm}
\begin{remark}[Problem Formulation]\label{Rem:Generality}
    The formulation of the $\SCP$ encompasses a vast range of problems, namely any uncertain
    optimization problem that becomes convex if the value of $\den$ were fixed.
    (a) Any uncertain convex objective function $f(\cdot,\den)$ can be included by an epigraph
    reformulation, with the new objective being a scalar and hence linear \cite[Sec.\,3.1.7]
    {BoydVan:2004}.
    (b) Joint chance constraints, where $x$ must satisfy multiple convex constraints simultaneously
    with probability $(1-\ep)$, are covered since the intersection of convex sets is convex.
    (c) Additional deterministic, convex constraints can be included by intersection with the
    compact set $\BX$.
\end{remark}
\vspace*{0.15cm}

The characterization of the feasible set of a chance constraint requires exact knowledge of the
probability distribution of $\den$. Moreover, the feasible set is non-convex and difficult to
express explicitly, except for very special cases \cite{BirgLouv:1997,KallMay:2011,Prekopa:1995,Shapiro:2009}.
This makes the $\SCP$, in full generality and especially in higher dimensions $d$, an extremely
difficult problem to solve.

The scenario approach can be used to find an \emph{approximate solution} to the $\SCP$, which is
considered to be any point in $\BX$ that is feasible for the chance constraint with some given
(very high) \emph{confidence} $(1-\theta)\in (0,1)$.
This problem is usually not as hard, if an approximate solution is chosen in a low-violation region
of the decision space (with high confidence). However, then the resulting objective value may
be poor, in which case the approximate solution shall be called `\emph{conservative}'.
Clearly, it is of major interest to find approximate solutions that are the least conservative (\ie
with an objective value as low as possible), and this is the goal of the scenario approach.

The basic idea of the scenario approach is to draw a specific number $K\in\BN$ of samples
(`\emph{scenarios}') from the uncertainty $\den$, and to take the optimal solution that is feasible
under all of these scenarios (`\emph{scenario solution}') as an approximate solution.
Computing the scenario solution involves a deterministic optimization program (`\emph{scenario
program}'), which is obtained by replacing the chance constraint (\ref{Equ:SCP}b) with the $K$
sampled deterministic constraints.

By construction, the scenario program is a deterministic, convex optimization program that can be
solved efficiently by standard algorithms \cite{BoydVan:2004,LuenYe:2008,NocWri:2006}.
Moreover, the scenario approach is distribution-free in the sense that it does not rely on a
particular mathematical model for the distribution of $\den$, or even its support set $\Delta$.
In fact, both may be unknown; the only requirements are stated in the following assumption.

\vspace*{0.15cm}
\begin{assumption}[Uncertainty]\label{Ass:Uncertainty}
    (a) The uncertainty $\den$ is a random variable with (possibly unknown) probability measure
    $\Pr$ and support set $\Delta$.
    (b) A sufficient number of independent random samples from $\den$ can be obtained.
\end{assumption}
\vspace*{0.15cm}

Note that Assumption \ref{Ass:Uncertainty} is fairly general. It could even be argued that the
scenario approach is at the heart of any robust and stochastic optimization method, because either
the uncertainty set $\Delta$ or the probability distribution of $\den$ are usually constructed based
on some (necessarily finite) experience of the uncertainty.

Tight bounds for the proper choice of the sample size $K$ are established by \cite{Cala:2010,
CampGar:2008}, when linking it directly to the probability with which the scenario solution violates
the chance constraint (\ref{Equ:SCP}b).
Moreover, \cite{Cala:2010,CampGar:2011} show that the theory can be extended to the case where
$R\leq K$ sampled constraints are discarded \emph{a posteriori}, that is after observing the
outcomes of the $K$ samples.
While this increases the complexity of the scenario approach (in terms of data requirement and
computation), it can be used to improve the objective value achieved by the scenario solution.
In fact, the scenario solution can be shown to converge to the exact solution of \eqref{Equ:SCP}
when the number of discarded constraints are increased, given that some mild technical assumptions
hold, \cf \cite[Sec.\,4.4]{CampGar:2011}

\subsection{Novel Contributions}

From a practical point of view, the strongest appeal of the scenario approach is the facility of
its application and the low computational complexity.
It becomes particularly attractive for uncertain optimization problems in higher dimensions,
as these occur frequently in fields such as engineering or logistics.
In these cases, an uncertain constraint will often not involve all decision variables
simultaneously, as allowed by the general case of (\ref{Equ:SCP}b).
Instead, multiple uncertain constraints may be present, each of them involving only a subset of
the decision variables.

\vspace*{0.15cm}
\begin{example}[Multi-Stage Decision Problems]\textnormal{
An important example are uncertain \emph{multi-stage decision problems} \cite[Cha.\,7]{BirgLouv:1997}, \cite[Cha.\,8]{KallMay:2011} \cite[Cha.\,13]{Prekopa:1995} \cite[Cha.\,3]{Shapiro:2009},
which occur in many fields such as production planning, portfolio optimization, or control theory.
The basic setting is that some \emph{decision} (\eg on production quantities, buy/sell
orders, or control inputs) has to be taken repeatedly at a finite number of time steps.
Each decision affects the \emph{state} of the system (\eg inventory level, portfolio, or state
variable) at the subsequent time step.
Besides the decision, the state is also subject to uncertain influences (\eg customer
demand, price fluctuations, or dynamic disturbances).
If constraints on the state variables are present (\eg service levels, value at risk, or safety
regions), this adds multiple uncertain constraints (one for the state of each time step) to the
overall decision problem.
Further deterministic constraints may hold for the decision variables, for example.
The special structure of such a problem is that a constraint on the state at some time step
involves only the decisions made prior to this time step, while the decisions afterwards are
not involved.}
\end{example}
\vspace*{0.15cm}

This paper extends the theory of the scenario approach for problems where a single (or multiple)
chance constraint(s) are present that involve only a subset of the decision variables.
More precisely, the chance constraint(s) may affect only a certain subspace of the decision space,
whose dimension will be called its `\emph{support rank}'.
Other constraints, either deterministic or uncertain, cover the directions that are left
unconstrained, so that the solution remains bounded.

The main result of this paper is that an uncertain constraint with a lower support rank can only
supply a lower number of \emph{support constraints} \cite{Cala:2010,CalaCamp:2005,CampGar:2008},
and therefore its associated sample size can be reduced.
This leads to a subtle shift from the idea of a `\emph{problem dimension}' in the existing theory
to that of a `\emph{support dimension}' of a particular chance constraint.
Moreover, it requires an extension of the existing theory to cope with multiple chance constraints
in the uncertain optimization program.
Finally, the approach of constraint removal \emph{a posteriori} is carried over almost analogously
to this extended setting.

From a practical point of view, these extensions improve on the merits of the scenario approach for
problems that have a structure described above.
In particular, the lower sample sizes reduce the computational complexity of the scenario approach
and simultaneously improve the objective value of the scenario solution.
At the same time, the feasibility guarantees for the scenario solution remain as strong as before.
Hence the extensions of this paper, when applicable, offer only advantages over the existing
results on the scenario approach.

\subsection{Organization of the Paper}

Section \ref{Sec:Problem} contains the problem statement.
Section \ref{Sec:Stucture} introduces some background on its properties, and states the rigorous
definitions for the `\emph{support dimension}' and the `\emph{support rank}' of a chance
constraint.
Section \ref{Sec:ScenSol} contains the main results of this paper, which give the improved sample
bounds in the presence of a single (or multiple) chance constraint(s) of limited support rank.
Section \ref{Sec:DiscardConstr} extends this theory to the sampling-and-discarding procedure, which
can be used to improve the objective value of the scenario solution, at the price of larger data
requirements and an increased computational complexity.
Section \ref{Sec:Example} presents a brief numerical example that demonstrates the application of
the presented theory, as well as its potential benefits when compared to existing results.

\section{Problem Formulation}\label{Sec:Problem}

This section introduces the generalized problem formulation with multiple chance constraints, the
corresponding scenario program, and some basic terminology.

\subsection{Stochastic Program with Multiple Chance Constraints}\label{Sec:MCP}

Consider the following extension of the $\SCP$ to an optimization problem with linear objective
function and multiple chance constraints ($\MCP$):
\begin{subequations}\label{Equ:MCP}\begin{align}
    \min_{x\in\BX}\quad &c\tp x\ec\\
    \st\quad&\Pr\bigl[f_{i}(x,\den)\leq 0\bigr]\geq(1-\ep_{i})
    \qquad\fa i\in\BN_{1}^{N}\ec
\end{align}\end{subequations}
where $i$ is the chance constraint index in $\BN_{1}^{N}:=\{1,2,...,N\}$.
The remarks for the $\SCP$ in Section \ref{Sec:SCP} apply analogously; in particular the following
key assumption is made.

\vspace*{0.15cm}
\begin{assumption}[Convexity]\label{Ass:Convexity}
    The constraint functions $f_i:\BRd\times\Delta\to\BR$ of all chance constraints $i\in
    \BN_{1}^{N}:=\{1,...,N\}$ are convex in their first argument $x\in\BRd$ for $\Pr$-almost every
    $\den\in\Delta$.
\end{assumption}
\vspace*{0.15cm}

Other than Assumption \ref{Ass:Convexity}, the dependence of the functions $f_i(x,\den)$ on the
uncertainty $\den$ is completely generic.

The use of `$\min$' instead of `$\inf$' in (\ref{Equ:MCP}a) is justified by the fact that the
feasible set of a single chance constraint is closed under fairly general assumptions
\cite[Thm.\,2.1]{KallMay:2011}.
This implies that the feasible set of the $\MCP$ is compact, due to the presence of $\BX$, and the
infimum is indeed attained.

It remains a standing assumption that the $\sigma$-algebra of $\Pr$-measurable sets in $\Delta$ is
large enough to contain all sets whose probability is measured in this paper, like the ones in
(\ref{Equ:MCP}b), \cf \cite[p.\,4]{CampGar:2008}.

In order to avoid technical issues, which are of little relevance for most practical applications,
the following is assumed, \cf \cite[Ass.\,1]{CampGar:2008}.

\vspace*{0.15cm}
\begin{assumption}[Existence and Uniqueness]\label{Ass:Uniqueness}
	(a) Problem \eqref{Equ:MCP} admits at least one feasible point. By the compactness of
	$\BX$, this implies that there exists at least one optimal point of \eqref{Equ:MCP}.
    (b) If there are multiple optimal points of \eqref{Equ:MCP}, a unique one is selected by
    the help of a \emph{tie-break rule} (\eg the lexicographic order on $\BRd$).
\end{assumption}
\vspace*{0.15cm}

In principle, an approximate solution to the $\MCP$ can be obtained by the classic scenario
approach.
Namely, a $\SCP$ can be setup with the same objective function (\ref{Equ:SCP}a) as the $\MCP$, and
a chance constraint (\ref{Equ:SCP}b) defined by
\begin{equation}\label{Equ:SingleReform}
	f(x,\den):=\max\bigl\{f_1(x,\den),\hdots,f_N(x,\den)\bigr\}\qquad\text{and}\qquad
	\ep:=\min\bigl\{\ep_{1},\ep_{2},...,\ep_{N}\bigr\}\ef
\end{equation}
Note that $f(x,\den)$ is convex in $x$ for almost every $\den$, since the pointwise maximum of
convex functions is convex.
Any feasible point of this $\SCP$ is also a feasible point of the $\MCP$, and hence an
approximate solution to the $\SCP$ with confidence $(1-\theta)$ is also an approximate solution to
the $\MCP$ with confidence $(1-\theta)$.

However, this procedure introduces a considerable amount of conservatism, because it requires the
scenario solution to simultaneously satisfy \emph{all} constraints $i=1,...,N$ with the
\emph{highest} of all probabilities $(1-\ep_{i})$.
Clearly, this conservatism becomes more severe if the number of chance constraints $N$ is large and
there is a great variation in the values of $\ep_{i}$.

\subsection{The Extended Scenario Approach}\label{Sec:MSP}

The extended scenario approach of this paper can be used to compute an approximate solution of the
$\MCP$, which is a feasible point of every chance constraint $i=1,...,N$ with a given confidence
probability of $(1-\theta_{i})$.
The key difference from the classic scenario approach is that each chance constraint $i\in
\BN_{1}^{N}$ is sampled separately, and with an individual sample size $K_{i}\in\BN$.

Let the \emph{random samples} pertaining to constraint $i$ be denoted $\de^{(i,\kappa_{i})}$, where
$\kappa_{i}\in\{1,...,K_{i}\}$, and for brevity also as the collective \emph{multi-sample} $\om^{(i)}:=\{\de^{(i,1)},...,\de^{(i,K_{i})}\}$.
The collection of all samples is combined in an overall multi-sample $\om:=\{\om^{(1)},...,\om^{(N)}
\}$, with the total number of samples given by $K:=\sum_{i=1}^{N}K_{i}$.
All of these samples can be considered `identical copies' of the random uncertainty $\den$, in the
sense that they are themselves random variables and satisfy the following key assumption.

\newpage 
\begin{assumption}[Independence and Identical Distribution]\label{Ass:Independence}
    The sampling procedure is designed such that the set of all random samples, together with the
    actual random uncertainty,
    \begin{equation*}
        \bigcup_{i\in\BN_{1}^{N}}\bigl\{\de^{(i,1)},...,\de^{(i,K_{i})}\bigr\}
        \cup\bigl\{\den\bigr\}
    \end{equation*}
    form a set of \emph{independent and identically distributed (\iid)} random variables.
\end{assumption}
\vspace*{0.15cm}

The multi-sample $\om$ is an element of $\Delta^{K}$, the $K$-th product of the uncertainty set
$\Delta$, and it is distributed according to $\Pb^{K}$, the $K$-th product of the measure $\Pb$.
The scenario program for multiple chance constraints ($\MSP[\om^{(1)},...,\om^{(N)}]$) is
constructed as follows:
\begin{subequations}\label{Equ:MSP}\begin{align}
    \min_{x\in\BX}\quad & c\tp x\ec\\
    \st\quad  & f_i\bigl(x,\de^{(i,\kappa_{i})}\bigr)\leq 0
    \qquad\fa\kappa_{i}\in\BN_{1}^{K_{i}},\:\:\fa i\in\BN_{1}^{N}\ef
\end{align}\end{subequations}
In problem \eqref{Equ:MSP}, the objective function of the $\MCP$ is minimized, while forcing $x$ to
lie inside the constrained sets for all samples $\de^{(i,\kappa_{i})}$ substituted into the
corresponding constraint $i\in\BN_{1}^{N}$.
Clearly, the solution to problem \eqref{Equ:MSP} is itself a random variable, as it depends on the
random multi-sample $\om$.
For this reason, the scenario approach is a \emph{randomized method} for finding an approximate
solution to the $\MCP$.

Of course, the $\MSP$ is actually solved for the observations of the random samples, leading to its
deterministic instance ($\DSP[\bom^{(1)},...,\bom^{(N)}]$):
\begin{subequations}\label{Equ:DSP}\begin{align}
    \min_{x\in\BX}\quad & c\tp x\ec\\
    \st\quad  & f_i\bigl(x,\bde^{(i,\kappa_{i})}\bigr)\leq 0
    \qquad\fa\kappa_{i}\in\BN_{1}^{K_{i}},\:\:\fa i\in\BN_{1}^{N}\ef
\end{align}\end{subequations}
Note that \eqref{Equ:DSP} arises from \eqref{Equ:MSP} by replacing the \emph{(random) samples}
$\de^{(i,\kappa_{i})}$, $\om^{(i)}$, $\om$ with their \emph{(deterministic) outcomes}
$\bde^{(i,\kappa_{i})}$, $\bom^{(i)}$, $\bom$.
Throughout the paper, these outcomes are indicated by a bar, to distinguish them from the
corresponding random variables.
By Assumption \eqref{Ass:Convexity}, $\DSP$ constitutes a convex program that can be solved
efficiently by a suitable algorithm for convex optimization, \cf \cite{BoydVan:2004,LuenYe:2008,NocWri:2006}.


Note that \eqref{Equ:MSP} remains important for analyzing the (probabilistic) properties of the
(random) scenario solution.
In fact, the subsequent theory is mainly concerned with showing that, with a very high confidence,
the scenario solution is a feasible point of the chance constraints (\ref{Equ:MCP}b), provided that
the sample sizes $K_{1},...,K_{N}$ are appropriately selected.

\subsection{Randomized Solution and Violation Probability}\label{Sec:RandSol}

In order to avoid unnecessary complications, the following technical assumption ensures that there
always exists a feasible solution to the $\MSP$, \cf \cite[p.\,3]{CampGar:2008}.

\vspace*{0.15cm}
\begin{assumption}[Feasibility]\label{Ass:Feasibility}
    (a) For any number of samples $K_{1},...,K_{N}$, the $\MSP$ admits a feasible solution almost
    surely.
    (b) For the sake of notational simplicity, any $\Pr$-null set for which (a) may not hold is
    assumed to be removed from $\Delta$.
\end{assumption}
\vspace*{0.15cm}

Assumption \ref{Ass:Feasibility} can be taken for granted in the majority of practical problems.
When it does not hold in a particular case, a generalization of the presented theory accounting for
the infeasible case can be developed along the lines of \cite{Cala:2010}.

Hence the existence of a solution to $\DSP$ is ensured, and uniqueness holds by Assumption
\ref{Ass:Convexity} and by carry-over of the tie-break rule of Assumption \ref{Ass:Uniqueness}(b),
see \cite[Thm.\,10.1,\,7.1]{Rocka:1970}. Therefore the \emph{solution map}
\begin{equation}\label{Equ:SolMap}
    \bxo:\Delta^{K}\to\BX
\end{equation}
is well-defined, returning the unique optimal point $\bxo(\bom^{(1)},...,\bom^{(N)})$ of the
$\DSP$ for a given outcome of the multi-samples $\{\bom^{(1)},...,\bom^{(N)}\}\in \Delta^{K}$.
The solution map can also be applied to the $\MSP$, for which it is denoted by
$\xo:\Delta^{K}\to\BX$.
Now $\xo(\om^{(1)},...,\om^{(N)})$ represents a random vector of unknown probability
distribution, which is also referred to as the \emph{scenario solution}.
In fact, its distribution is a complicated function of the geometry and the parameters of the
problem.

Note that there are two levels of randomness present in the analysis. The first is introduced by the
random samples in $\om$, which affect the choice of the scenario solution. The second is the actual
random uncertainty $\den$, which determines whether or not the scenario solution is feasible with
respect to the chance constraints (\ref{Equ:MSP}b).
For this reason, the scenario approach presented here is also called a
\emph{double-level-of-probability approach} \cite[Rem.\,2.3]{Cala:2009}.

To highlight the two probability levels more clearly, suppose first that the multi-sample $\bom$ has
already been observed, so that the scenario solution $\bxo(\bom^{(1)},...,\bom^{(N)})$ is fixed.
Then for each chance constraint $i=1,...,N$ in (\ref{Equ:MCP}b), the \emph{a posteriori violation
probability} $\Vb_{i}(\bom^{(1)},...,\bom^{(N)})$ is given by
\begin{equation}\label{Equ:ViolPost}
	\Vb_{i}\bigl(\bom^{(1)},...,\bom^{(N)}\bigr):=
	\Pb\bigl[f_{i}\bigl(\bxo(\bom^{(1)},...,\bom^{(N)}),\den\bigr)>0\bigr]\ef
\end{equation}
In particular, each $\Vb_{i}$ has a deterministic, yet generally unknown, value in $[0,1]$.
If the multi-sample $\om$ has not yet been observed, the scenario solution
$\xo(\om^{(1)},...,\om^{(N)})$ is a random vector and so the \emph{a priori violation probability}
\begin{equation}\label{Equ:ViolPrior}
	\V_{i}\bigl(\om^{(1)},...,\om^{(N)}\bigr):=
	\Pb\bigl[f_{i}\bigl(\xo(\om^{(1)},...,\om^{(N)}),\den\bigr)>0\bigr]
\end{equation}
becomes itself a random variable on $(\Delta^{K},\Pb^{K})$, with support $[0,1]$.
Hence the goal is to choose appropriate sample sizes $K_{1},...,K_{N}$ which ensure that
$\V_{i}(\om^{(1)},...,\om^{(N)})\leq\ep_{i}$ for all $i=1,...,N$, with a sufficiently high
confidence $(1-\theta_{i})$.
Before these results are derived however, some structural properties of scenario programs and
technical lemmas ought to be discussed.

\section{Structural Properties of the Constraints}\label{Sec:Stucture}

In this section, a structural property of a chance constraint is introduced which yields a
reduction in the number of samples below the levels given by the existing theory
\cite{CalaCamp:2005,Cala:2010,CampGar:2008}.
This property relates to the new concept of the \emph{support dimension} or, in a form that is more
easily checked for many practical instances, the \emph{support rank}.

\subsection{Support Constraints}\label{Sec:SupConstr}

The concept of a \emph{support constraint} carries over from the $\SCP$ case, \cf \cite[Def.\,4]{CalaCamp:2005}. An illustration is given in Figure \ref{Fig:SupConstr}.

\vspace*{0.15cm}
\begin{definition}[Support Constraint]\label{Def:SupConstr}
    Consider the $\DSP$ for some outcome of the multi-sample $\bom$.
    (a) For some $i\in \BN_{1}^{N}$ and $\kappa_{i}\in\BN_{1}^{K_{i}}$, constraint $f_{i}(x,\bde^
    {(i,\kappa_{i})})\leq 0$ is a \emph{support constraint} of \eqref{Equ:DSP} if its removal from
    the problem entails a change in the optimal solution:
	\begin{equation*}
		\bxo\bigl(\bom^{(1)},...,\bom^{(N)}\bigr)\neq
		\bxo\bigl(\bom^{(1)},...,\bom^{(i-1)},\bom^{(i)}\setminus\{\bde^{(i,\kappa_{i})}\},
		\bom^{(i+1)},...,\bom^{(N)}\bigr)\ef
	\end{equation*}
    In this case the sample $\bde^{(i,\kappa_{i})}$ is also said `to \emph{generate} this support
    constraint.'
    (b) For each $i\in \BN_{1}^{N}$, the indices $\kappa_{i}$ of all samples that generate a support
    constraint of the $\DSP$ are included in the set $\bSc_{i}$.
    Moreover, the tuples $(i,\kappa_{i})$ of all support constraints of the $\DSP$ are collected in
    the \emph{support (constraint) set} $\bSc$.
    With some abuse of this notation, $\bSc=\bigcup_{i=1}^{N}\bSc_{i}$.
\end{definition}
\vspace*{0.15cm}

Definition \ref{Def:SupConstr}(a) can be stated equivalently in terms of the objective function: a
sampled constraint is a support constraint if and only if the optimal objective function value (or
its preference by the tie-break rule) is strictly larger than when the constraint were removed.
To be more precise, Definition \ref{Def:SupConstr}(b), $\bSc$ may also account for the set $\BX$ as
an additional support constraint. This minor subtlety is tacitly understood in the sequel.

\begin{figure}[t]
    \centering
    \subfloat[b][Two Support Constraints.]{
    \parbox[c][45mm]{42mm}{\centering
    \begin{pspicture}(0,0)(42,45)
        \psline[linewidth=2pt]{->}(15,45)(15,37)
        \rput[bl](16,40.5){$-c$}
        \bol
        \psbezier(3,40)(15,24)(24,18)(40,10)
        \pscustom[linestyle=none,fillstyle=hlines,hatchwidth=0.2pt,hatchsep=3pt,hatchangle=40]{
        	\psbezier(3,40)(15,24)(24,18)(40,10)
			\psline(40,10)(38.2,7.6)
			\psbezier(38.2,7.6)(22.2,15.6)(12.6,22.2)(0.6,38.2)
			}
        \psbezier(39,40)(27,24)(18,18)(2,10)
        \pscustom[linestyle=none,fillstyle=hlines,hatchwidth=0.2pt,hatchsep=3pt,hatchangle=-40]{
        	\psbezier(39,40)(27,24)(18,18)(2,10)
			\psline(2,10)(3.8,7.6)
			\psbezier(3.8,7.6)(19.8,15.6)(29.4,22.2)(41.4,38.2)
			}
		\mel
		\psbezier(2.4,31)(14.4,15)(18,10.4)(30,2.4)
        \pscustom[linestyle=none,fillstyle=hlines,hatchwidth=0.2pt,hatchsep=3pt,hatchangle=40]{
        	\psbezier(2.4,31)(14.4,15)(18,10.4)(30,2.4)
			\psline(30,2.4)(28.4,0)
			\psbezier(28.4,0)(16.4,8)(12,13.4)(0,29.4)
			}	
		\psbezier(39,25)(27,9)(17,6.85)(5,2.85)
        \pscustom[linestyle=none,fillstyle=hlines,hatchwidth=0.2pt,hatchsep=3pt,hatchangle=-40]{
        	\psbezier(39,25)(27,9)(17,6.85)(5,2.85)
			\psline(5,2.85)(5.95,0)
			\psbezier(5.95,0)(17.85,4)(29.4,7.4)(41.4,23.4)
			}
    \end{pspicture}}}
    \subfloat[b][One Support Constraint.]{
    \parbox[c][45mm]{42mm}{\centering
    \begin{pspicture}(0,0)(42,45)
        \pscustom[linestyle=none,fillstyle=hlines,hatchwidth=0.2pt,hatchsep=3pt,hatchangle=-40]{
			\pscurve(0,14.88)(18.12,0)(36.24,14.88)
			}
		\pscustom[linestyle=none,fillstyle=solid,fillcolor=white]{
        	\pscurve(2.12,16)(18.12,3)(34.12,16)
			}
		\pscustom[linestyle=none,fillstyle=hlines,hatchwidth=0.2pt,hatchsep=3pt,hatchangle=-40]{
			\pscurve(4.3,30.05)(23.15,10)(42,30.05)
			}
		\pscustom[linestyle=none,fillstyle=solid,fillcolor=white]{
        	\pscurve(7.15,31)(23.15,13)(39.15,31)
			}	
        \pscustom[linestyle=none,fillstyle=hlines,hatchwidth=0.2pt,hatchsep=3pt,hatchangle=-40]{
			\pscurve(6.15,40.65)(22,16)(37.85,40.65)
			}
		\pscustom[linestyle=none,fillstyle=solid,fillcolor=white]{
        	\pscurve(9,41)(22,19)(35,41)
			}
		\bol
	    \pscurve(9,41)(22,19)(35,41)
	    \mel
	    \pscurve(7.15,31)(23.15,13)(39.15,31)
	    \pscurve(2.12,16)(18.12,3)(34.12,16)
		\psline[linewidth=2pt]{->}(15,45)(15,37)
        \rput[bl](16,40.5){$-c$}
    \end{pspicture}}}
    \subfloat[b][One Support Constraint.]{
    \parbox[c][45mm]{42mm}{\centering
    \begin{pspicture}(0,0)(42,45)
        \psline[linewidth=2pt]{->}(15,45)(15,37)
        \rput[bl](16,40.5){$-c$}
        \bol
        \psbezier(3,40)(15,24)(24,18)(40,10)
        \pscustom[linestyle=none,fillstyle=hlines,hatchwidth=0.2pt,hatchsep=3pt,hatchangle=40]{
        	\psbezier(3,40)(15,24)(24,18)(40,10)
			\psline(40,10)(38.2,7.6)
			\psbezier(38.2,7.6)(22.2,15.6)(12.6,22.2)(0.6,38.2)
			}
		\mel
        \psbezier(39,35)(27,23)(14,14)(2,10)
        \pscustom[linestyle=none,fillstyle=hlines,hatchwidth=0.2pt,hatchsep=3pt,hatchangle=-40]{
        	\psbezier(39,35)(27,23)(14,14)(2,10)
			\psline(2,10)(2.95,7.15)
			\psbezier(2.95,7.15)(14.95,11.15)(29.1,20.9)(41.1,32.9)
			}
		\psbezier(32,42)(27,27)(18,12)(8,4)
        \pscustom[linestyle=none,fillstyle=hlines,hatchwidth=0.2pt,hatchsep=3pt,hatchangle=-40]{
       		\psbezier(32,42)(27,27)(18,12)(8,4)
			\psline(8,4)(9.87,1.66)
			\psbezier(9.87,1.66)(19.87,9.66)(29.85,26.05)(34.85,41.05)
			}
		\psbezier(1.8,25)(13.8,16)(28,6.85)(40,2.85)
        \pscustom[linestyle=none,fillstyle=hlines,hatchwidth=0.2pt,hatchsep=3pt,hatchangle=40]{
        	\psbezier(1.8,25)(13.8,16)(28,6.85)(40,2.85)
			\psline(40,2.85)(39.15,0)
			\psbezier(39.15,0)(27.15,4)(12,13.6)(0,22.6)
			}
    \end{pspicture}}}
    \caption{Illustration of Definition \ref{Def:SupConstr} in $\BR^{2}$. The arrow indicates the
    optimization direction, the bold lines are the \emph{support constraints} of the respective
    configuration.\label{Fig:SupConstr}}
\end{figure}
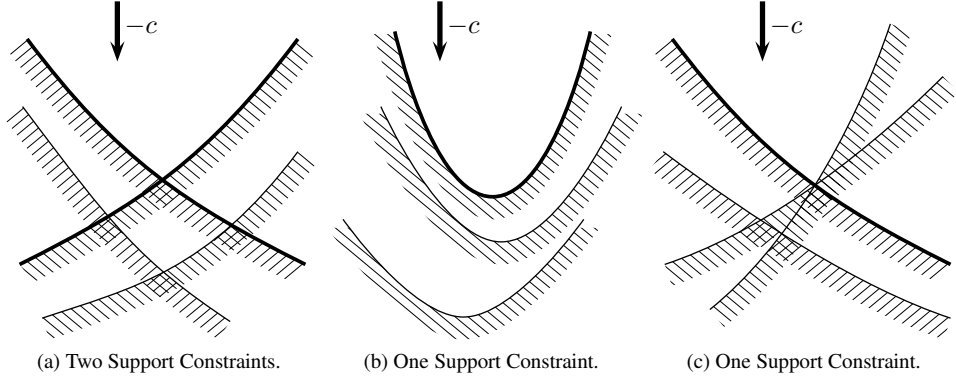

In the stochastic setting of the $\MSP[\om^{(1)},...,\om^{(N)}]$, whether or not a particular random
sample  $\de^{(i,\kappa_{i})}$ generates a support constraint becomes a random event, which can be
associated with a certain probability. Similarly, the support constraint set $\Sc$, and its subsets
$\Sc_{1},...,\Sc_{N}$ contributed by the various chance constraints, are naturally random sets.

\subsection{Support Dimension}\label{Sec:SupDim}

The link between the sample sizes $K_{1},...,K_{N}$ and the corresponding violation probability of
the scenario solution depends decisively on the `dimensions' of the problem.
The following lower bounds represent a mild technical condition, \cf \cite[Thm.\,3.3]{Cala:2010} and
\cite[Def.\,2.3]{CampGar:2008}.

\vspace*{0.15cm}
\begin{assumption}\label{Ass:SampleSize}
	The sample sizes satisfy $K_{1},...,K_{N}\geq d$.
\end{assumption}
\vspace*{0.15cm}

In the existing literature, the dimension of the $\SCP$ has been characterized by \emph{Helly's
dimension}, \cf \cite[Def.\,3.1]{Cala:2010}.
In this paper, there is a subtle shift from the problem dimension to the dimension of chance
constraint $i$ in the $\MCP$, embodied by its \emph{support dimension}.

\vspace*{0.15cm}
\begin{definition}[Support Dimension]\label{Def:SupDim}
    (a) Denote by $|\Sc|$ the (random) cardinality of the set $\Sc$. \emph{Helly's dimension} is the
    smallest integer $\sd$ that satisfies
	\begin{equation*}
        \underset{\omega\in\Delta^{K}}{\esssup}\:|\Sc|\leq\sd\ef
    \end{equation*}
	(b) The \emph{support dimension} of a chance constraint $i\in\BN_{1}^{N}$ in the $\MSP$ is the
	smallest integer $\sdi$ that satisfies
    \begin{equation*}
        \underset{\omega\in\Delta^{K}}{\esssup}\:|\Sc_{i}|\leq\sdi\ef
    \end{equation*}
\end{definition}
\vspace*{0.15cm}

From a basic argument using Helly's Theorem, the number of support constraints $|\Sc|$ of any
(feasible) convex optimization problem in $\BRd$ is upper bounded by the dimension of the decision
space $d$, \cf \cite[Thm.\,2]{CalaCamp:2005}.
This result implies that finite integers $\sd$ and $\sd_{1},...,\sd_{N}$ matching Definition
\ref{Def:SupDim} always exist, so that the concepts of `Helly's dimension' and `support dimension'
are indeed well-defined.
Moreover, the result provides immediate upper bounds on the support dimension of each chance
constraint $i\in\BN_{1}^{N}$ in \eqref{Equ:MSP}, namely $\sdi\leq\sd\leq d$.

It turns out that the support dimension $\sdi$ directly relates to the minimum sample size $K_{i}$ that is required for a given violation level $\ep_{i}$ and residual probability $\theta_{i}$.
The basic mechanism shall be illustrated by the proposition below, for the simpler case of a
\emph{single-level of probability} problem, \cf \cite[Thm.\,1]{CalaCamp:2005}.

\vspace*{0.15cm}
\begin{proposition}[Probability Bound]\label{The:ProbBound}
    Consider a particular constraint $i\in\BN_{1}^{N}$ in the $\MSP[\om^{(1)},...,\om^{(N)}]$ with
    some fixed sample size $K_{i}$, and let $\hat{\sd}_{i}$ be an upper bound for its support
    dimension $\sdi$. Then the following holds:
    \begin{equation}\label{Equ:ProbBound1}
        \Pb^{K+1}\bigl[f_{i}\bigl(\xo(\om^{(1)},...,\om^{(N)}),\den\bigr)>0\bigr]\leq
        \frac{\hat{\sd}_{i}}{K_{i}+1}\ef
    \end{equation}
\end{proposition}

\vspace*{-0.3cm}
\begin{proof}
    Consider $\MSP':=\MSP[\om^{(1)},...,\om^{(i-1)},\om^{(i)}\cup\{\den\},\om^{(i+1)},
    ...,\om^{(N)}]$ and let $\Sc_{i}'\subset\{1,...,K_{i},K_{i}+1\}$ denote the set of support
    constraints generated by samples from $\om^{(i)}\cup\{\den\}$, where $(K_{i}+1)\in\Sc_{i}'$
    stands for $\den$ generating a support constraint.
    Note that the event where $f_{i}\bigl(\xo(\om^{(1)},...,\om^{(N)}),\den\bigr)>0$ can be
    equivalently expressed as $\den$ generating a support constraint of $\MSP'$.
    Hence condition \eqref{Equ:ProbBound1} can be reformulated as
    \begin{equation}\label{Equ:ProbBound2}
        \Pb^{K+1}\bigl[(K_{i}+1)\in\Sc_{i}'\bigr]\leq\frac{\hat{\sd}_{i}}{K_{i}+1}\ef
    \end{equation}

    To analyze the event $(K_{i}+1)\in\Sc_{i}'$, observe that by Assumption \ref{Ass:Independence}
    all samples in $\om^{(i)}\cup\{\den\}$ are \iid, whence all sampled instances of constraint $i$
    in (\ref{Equ:MSP}b) along with `$f_{i}(\,\cdot\,,\den)\leq 0$' are probabilistically identical.
    In particular, they are all equally likely to become a support constraint of $\MSP'$.
    Hence if the number of support constraints $|\Sc_{i}'|$ were known, then
    \begin{equation*}
        \Pb^{K+1}\bigl[(K_{i}+1)\in\Sc_{i}'\bigr]=\frac{|\Sc_{i}'|}{K_{i}+1}\ef
    \end{equation*}
    Even though $|\Sc_{i}'|$ is a random variable, by Definition \ref{Def:SupDim}(b) $|\Sc_{i}'|\leq
    \sdi$ almost surely, and by assumption $\sdi\leq\hat{\sd}_{i}$.
    This immediately yields \eqref{Equ:ProbBound1}.
\end{proof}

\subsection{The Support Rank}\label{Sec:SupRank}

In many practical cases, the support dimension $\sdi$ of a chance constraint $i\in\BN_{1}^{N}$ in
the $\MSP$ is not known exactly. Then it has to be replaced by some upper bound.
As argued above, the existing upper bound is given by the dimension $d$ of the decision space.
However, this bound may not be tight in the case where the constraints satisfy a certain structural
property, namely when they have a limited \emph{support rank}.

Intuitively speaking, the support rank is the dimension $d$ of the decision space less the maximal
dimension of an (almost surely) \emph{unconstrained subspace}.
The latter is understood as a linear subspace of $\BRd$ that cannot be constrained by the sampled
instances of constraint $i$, for almost every value of the multi-sample $\om^{(i)}$.



Before the support rank is introduced in a rigorous manner, three examples of constraint classes
with bounded support rank are described, in order to equip the reader with the necessary intuition
behind this concept. They also show that very common constraint classes possess this property, and
that in practical problems it can often be spotted easily.

\vspace*{0.15cm}
\begin{example}\label{Exa:SupportRank}
    \textnormal{For each of the following cases, a visual illustration can be found in Figure
    \ref{Fig:SupportRank}.}

    (a) Single Linear Constraint. \textnormal{Suppose some chance constraint $i\in\BN_{1}^{N}$ of
    (\ref{Equ:MCP}b) takes the linear form
    \begin{equation}\label{Equ:ExaLinConstr1}
        f_{i}(x,\den)\equiv a\tp x-b(\den)\ec
    \end{equation}
    where $a\in\BR^{d}$, and $b:\Delta\to\BR$ is a scalar depending on the uncertainty in a generic
    way.
    Note that these constraints in the $\MSP$ are unable to constrain any direction in the subspace
    orthogonal to the span of $a$, $\spn\{a\}^{\perp}$, regardless of the outcome of the
    multi-sample $\om^{(i)}$.
    Hence the support rank $\alpha$ of the chance constraint \eqref{Equ:ExaLinConstr1} is equal to
    $1$.}

    (b) Multiple Linear Constraints. \textnormal{As a generalization of case (a), suppose that some
    chance constraint $i\in\BN_{1}^{N}$ of (\ref{Equ:MCP}b) is given by
    \begin{equation}\label{Equ:ExaLinConstr2}
        f_{i}(x,\den)\equiv A(\den) x-b(\den)\ec
    \end{equation}
    where $A:\Delta\to\BR^{r\times d}$ and $b:\Delta\to\BR^{r}$ represent a matrix and a vector that
    depend on the uncertainty $\den$. Moreover, suppose that the uncertainty enters the matrix
    $A(\den)$ in such a way that the dimension of the linear span of its rows $A_{j,\cdot}(\den)$,
    for $j=1,...,r$, satisfies
    \begin{equation*}
        \dim\spn\bigl\{A_{j,\cdot}(\den)\:\big|\:j\in\BN_{1}^{r},\:\den\in\Delta\}\leq\beta<d\ef
    \end{equation*}
    Note that these constraints in the $\MSP$ are unable to constrain any direction in
    $\spn\bigl\{A_{j,\cdot}(\den)\:\big|\:j\in\BN_{1}^{r},\:\den\in\Delta\}^{\perp}$, regardless of
    the outcome of the multi-sample $\om^{(i)}$.
    Hence the support rank of the chance constraint \eqref{Equ:ExaLinConstr2} is equal to $\beta$.}

	(c) Quadratic Constraint. \textnormal{For a nonlinear example, consider the case where some
	chance constraint $i\in\BN_{1}^{N}$ of (\ref{Equ:MCP}b) is given by
    \begin{equation}\label{Equ:ExaLinConstr3}
        f_{i}(x,\den)\equiv\bigl(x-x_{c}(\den)\bigr)\tp Q\bigl(x-x_{c}(\den)\bigr) - r(\den)\ec
    \end{equation}
    where $Q\in\BR^{d\times d}$ is positive semi-definite with $\rnk Q=\gamma<d$, and $x_{c}:\Delta
    \to\BRd$, $r:\Delta\to\BR_{+}$ represent a vector and scalar that depend on the uncertainty.
    Note that these constraints in the $\MSP$ are unable to constrain any direction in the null
    space of the matrix $Q$, regardless of the outcome of the multi-sample $\om^{(i)}$.
    Since this null space has dimension $d-\gamma$, the support rank of the chance constraint
    \eqref{Equ:ExaLinConstr3} is equal to $\gamma$.}
\end{example}
\vspace*{0.15cm}

To introduce the support rank in a rigorous manner, pick a chance constraint $i\in\BN_{1}^{N}$ of
the $\MCP$.
For each point $x\in\BX$ and each uncertainty $\de\in\Delta$, denote the corresponding level set of
$f_{i}:\BRd\times\Delta\to\BR$ by
\begin{equation}\label{Equ:LevelSets}
    F_{i}(x,\de):=\bigl\{\xi\in\BRd\:\big|\:f_{i}(x+\xi,\de)=f_{i}(x,\de)\bigr\}\ef
\end{equation}

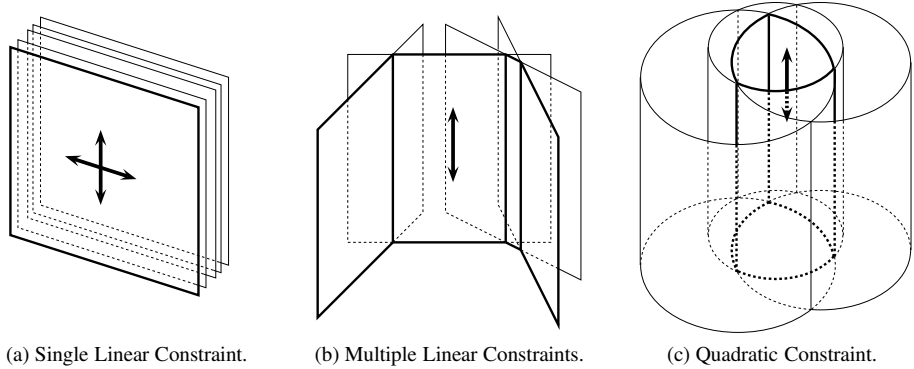
\begin{figure}[H]
    \centering
    \subfloat[b][Single Linear Constraint.]{
    \parbox[c][45mm]{42mm}{\centering
    \begin{pspicture}(3,0)(45,45)
        \stl
        \pspolygon(8,13)(8,38)(33,30)(33,5) 
        \thl
        \psline(9,37.6)(9,39)(34,31)(34,6)(32.6,6.45) 
        \psline(10.3,38.7)(10.3,40.3)(35.3,32.3)(35.3,7.3)(33.7,7.81) 
        \psline(11,40)(11,41)(36,33)(36,8)(35,8.32) 
        \psline(12,40.8)(12,42)(37,34)(37,9)(35.8,9.38) 
        \psline[linestyle=dashed,dash=1pt 1pt](32.6,6.45)(9,14)(9,37.6)
        \psline[linestyle=dashed,dash=1pt 1pt](33.7,7.81)(10.3,15.3)(10.3,38.7) 
        \psline[linestyle=dashed,dash=1pt 1pt](35,8.32)(11,16)(11,40) 
        \psline[linestyle=dashed,dash=1pt 1pt](35.8,9.38)(12,17)(12,40.8) 
        \bol
        \psline{<->}(20,17)(20,27)
        \psline{<->}(15.25,23.5)(24.77,20.45)
    \end{pspicture}}}
    \subfloat[b][Multiple Linear Constraints.]{
    \parbox[c][45mm]{42mm}{\centering
    \begin{pspicture}(0,3)(42,48)
        \stl
        \pspolygon(35,4)(30,14)(28,15)(13,15)(3,5)(3,30)(13,40)(28,40)(30,39)(35,29)
        \psline(30,14)(30,39) 
        \psline(28,15)(28,40) 
        \psline(13,15)(13,40) 
        \thl
        \psline(27,41)(27,45)(30,39) 
        \psline(20,40)(20,44)(28,40) 
        \psline(35,11.5)(38,10)(38,35)(30,39) 
        \psline(34,37)(34,40)(28,40) 
        \psline(7,34)(7,40)(13,40)   
        \psline(17,40)(17,44)(13,40) 
        \psline[linestyle=dashed,dash=1pt 1pt](30,14)(27,20)(27,41) 
        \psline[linestyle=dashed,dash=1pt 1pt](28,15)(20,19)(20,40) 
        \psline[linestyle=dashed,dash=1pt 1pt](30,14)(35,11.5) 
        \psline[linestyle=dashed,dash=1pt 1pt](28,15)(34,15)(34,37) 
        \psline[linestyle=dashed,dash=1pt 1pt](13,15)(7,15)(7,34)   
        \psline[linestyle=dashed,dash=1pt 1pt](13,15)(17,19)(17,40) 
        \bol
        \psline{<->}(21,23)(21,33)
    \end{pspicture}}}
    \subfloat[b][Quadratic Constraint.]{
    \parbox[c][45mm]{42mm}{\centering
    \begin{pspicture}(0,0)(42,45)
        \stl
        \psellipticarc(21,38)(9,6){224}{340} 
        \psellipticarc(27,36)(12,8){137}{195} 
        \psellipticarc(16,34)(13,9){5}{64} 
        \psellipticarc[linestyle=dashed,dash=1pt 1pt](21,13)(9,6){224}{340} 
        \psellipticarc[linestyle=dashed,dash=1pt 1pt](27,11)(12,8){137}{195} 
        \psellipticarc[linestyle=dashed,dash=1pt 1pt](16,9)(13,9){5}{64} 
        \psline(15.8,25)(15.8,33.1) 
        \psline(20.1,32)(20.1,42.2) 
        \psline(28.85,28.2)(28.85,35.15) 
        \psline[linestyle=dashed,dash=1pt 1pt](15.8,8.1)(15.8,25) 
        \psline[linestyle=dashed,dash=1pt 1pt](20.1,17.2)(20.1,32) 
        \psline[linestyle=dashed,dash=1pt 1pt](28.85,10.15)(28.85,28.2) 
        \thl
        \psellipticarc(21,38)(9,6){340}{224} 
        \psellipticarc(27,36)(12,8){195}{137} 
        \psellipticarc(16,34)(13,9){64}{5} 
        \psellipticarc[linestyle=dashed,dash=1pt 1pt](21,13)(9,6){340}{224} 
        \psellipticarc(27,11)(12,8){261}{0} 
        \psellipticarc[linestyle=dashed,dash=1pt 1pt](27,11)(12,8){195}{261} 
        \psellipticarc[linestyle=dashed,dash=1pt 1pt](27,11)(12,8){0}{137} 
        \psellipticarc(16,9)(13,9){180}{329} 
        \psellipticarc[linestyle=dashed,dash=1pt 1pt](16,9)(13,9){329}{5} 
        \psellipticarc[linestyle=dashed,dash=1pt 1pt](16,9)(13,9){64}{180} 
        \psline(3,9)(3,34) 
        \psline(39,11)(39,36) 
        \psline(25.7,3.1)(25.7,28.1) 
        \psline[linestyle=dashed,dash=1pt 1pt](16,17.9)(16,43) 
        \psline[linestyle=dashed,dash=1pt 1pt](23.8,18.6)(23.8,43.6) 
        \psline(12,25.5)(12,38) 
        \psline[linestyle=dashed,dash=1pt 1pt](12,13)(12,25.5) 
        \psline(30,28.2)(30,38) 
        \psline[linestyle=dashed,dash=1pt 1pt](30,13)(30,28.2) 
        \bol
        \psline{->}(22.5,32.1)(22.5,38)
        \psline[linestyle=dashed,dash=1pt 1pt]{<-}(22.5,28)(22.5,32.1)
    \end{pspicture}}}
    \caption{Illustration of Example \ref{Exa:SupportRank} in $\BR^{3}$. The arrows indicate the
    dimension of the \emph{unconstrained subspace}, equal to $3$ minus the respective \emph{support
    rank} $\alpha$, $\beta$, or $\gamma$. \label{Fig:SupportRank}}
\end{figure}

Let $\CL$ be the collection of all linear subspaces in $\BRd$.
In order to be unconstrained, select only those subspaces that are contained in almost all level
sets $F_{i}(x,\de)$:
\begin{equation}\label{Equ:SubSpace}
	\CL_{i}:=\bigcap_{\de\in\Delta}\bigcap_{x\in\BRd}
	\bigl\{L\in\CL\:\big|\:L\subset F_{i}(x,\de)\bigr\}\ef
\end{equation}
Introduce `$\preceq$' as the partial order on $\CL_{i}$ defined by set inclusion; \ie for any two
subspaces $L,L'\in\CL_{i}$, $L\preceq L'$ if and only if $L\subseteq L'$. Then the following
concepts are well-defined, as shown in Proposition \ref{The:UnconstrSub} below.

\vspace*{0.15cm}
\begin{definition}[Unconstrained Subspace, Support Rank]\label{Def:SupportRank}
    (a) The \emph{unconstrained subspace} $L_{i}$ of chance constraint $i\in\BN_{1}^{N}$ is the
    unique maximal element in $\CL_{i}$, in the sense that $L\preceq L_{i}$ for all $L\in\CL_{i}$.
    (b) The \emph{support rank} $\sri\in\BN_{0}^{d}$ of chance constraint $i\in\BN_{1}^{N}$ equals
    to $d$ minus the dimension of $L_{i}$,
	\begin{equation*}
		\sri:=d-\dim L_{i}\ef
	\end{equation*}
	\vspace*{-0.5cm}
\end{definition}
\vspace*{0.15cm}

It is a minor technicality in Definition \ref{Def:SupportRank} that any $\Pb$-null set that
adversely influences the dimension of the unconstrained subspace can be removed from $\Delta$; this
is tacitly understood.

Observe that if $\CL_{i}$ contains only the trivial subspace, then the support rank is actually
equal to Helly's dimension $d$.
On the other hand, if $\CL_{i}$ contains more than the trivial subspace, then the support rank
becomes strictly less than $d$.

\vspace*{0.15cm}
\begin{proposition}[Well-Definedness of Unconstrained Subspace]\label{The:UnconstrSub}
    The collection $\CL_{i}$ contains a unique maximal element $L_{i}$ in the set-inclusion sense,
    \ie $L_{i}$ contains all other elements of $\CL_{i}$ as subsets.
\end{proposition}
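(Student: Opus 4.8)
The plan is to show that $\CL_i$ is closed under sums of subspaces and then to extract its maximal element by a dimension count. First note that $\CL_i\neq\emptyset$: the trivial subspace $\{0\}$ lies in every level set $F_i(x,\de)$ because $f_i(x+0,\de)=f_i(x,\de)$, so $\{0\}\in\CL_i$. By Assumption~\ref{Ass:Convexity} together with the remark following \eqref{Equ:SubSpace}, we may discard a $\Pb$-null set from $\Delta$ and henceforth assume that $f_i(\cdot,\de):\BRd\to\BR$ is a (finite-valued) convex function for every $\de\in\Delta$.

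The technical core is the following elementary fact about convex functions. \emph{If $g:\BRd\to\BR$ is convex and $g$ is constant on each of two linear subspaces $L_1$ and $L_2$ (necessarily with the common value $g(0)$, since $0\in L_1\cap L_2$), then $g$ is constant on $L_1+L_2$.} Indeed, after subtracting the constant we may take $g(0)=0$, so $g\equiv 0$ on $L_1\cup L_2$. Any $p\in L_1+L_2$ can be written $p=\tfrac12(2p_1)+\tfrac12(2p_2)$ with $p_j\in L_j$; since $2p_j\in L_j$, convexity gives $g(p)\le\tfrac12 g(2p_1)+\tfrac12 g(2p_2)=0$. The same bound applied to $-p=(-p_1)+(-p_2)\in L_1+L_2$ gives $g(-p)\le 0$, and then $0=g(0)\le\tfrac12 g(p)+\tfrac12 g(-p)\le 0$, forcing $g(p)=0$.

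Now let $L_1,L_2\in\CL_i$ and fix any $x\in\BX$, $\de\in\Delta$. Then $L_1,L_2\subset F_i(x,\de)$, which means precisely that the convex function $g:=f_i(x+\,\cdot\,,\de)$ is constant (equal to $g(0)=f_i(x,\de)$) on both $L_1$ and $L_2$. By the fact above, $g$ is constant on $L_1+L_2$, i.e.\ $f_i(x+\xi,\de)=f_i(x,\de)$ for every $\xi\in L_1+L_2$, so $L_1+L_2\subset F_i(x,\de)$. Since $x\in\BX$ and $\de\in\Delta$ were arbitrary, $L_1+L_2\in\CL_i$; thus $\CL_i$ is closed under pairwise, and hence finite, sums.

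Finally, every element of $\CL_i$ has dimension at most $d$, so there is $S_i\in\CL_i$ of maximal dimension. For any $L\in\CL_i$ we have $S_i+L\in\CL_i$ and $\dim(S_i+L)\ge\dim S_i$; maximality forces $\dim(S_i+L)=\dim S_i$, i.e.\ $L\subseteq S_i$. Hence $S_i$ contains every element of $\CL_i$ as a subset, and any two subspaces with this property contain each other, so $S_i$ is unique. I expect the main obstacle to be the convexity fact of the second paragraph: one cannot propagate invariance ``one subspace at a time'' through points of $\BX$, since $x+\xi_1$ need not lie in $\BX$; the resolution is to argue globally on $f_i(x+\,\cdot\,,\de)$, using the two-sided convexity bound to pin down its value on all of $L_1+L_2$.
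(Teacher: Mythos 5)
Your proof is correct, and its decisive step differs from the paper's. The paper also proceeds by showing $\CL_{i}$ is closed under sums of subspaces and then appealing to maximality, but it proves $L_{A}\oplus L_{B}\subset F_{i}(x,\de)$ by applying the level-set identity twice in succession, $f_{i}(x+\xi_{A}+\xi_{B},\de)=f_{i}(x+\xi_{A},\de)=f_{i}(x,\de)$; the second application is made at the translated base point $x+\xi_{A}$, which need not lie in $\BX$, so as written it uses invariance at base points outside the range of the intersection in \eqref{Equ:SubSpace}. Your convexity lemma --- a finite convex function constant on two linear subspaces is constant on their sum, obtained from the two-sided midpoint bound --- replaces that step and needs only $L_{1},L_{2}\subset F_{i}(x,\de)$ at the single base point $x$; the price is that you invoke convexity and finiteness of $f_{i}(\cdot,\de)$ on all of $\BRd$, but that is exactly what Assumption \ref{Ass:Convexity} provides, so your argument is the more watertight one under the paper's own definitions (the paper's version is shorter and would be immediate if the intersection in \eqref{Equ:SubSpace} were taken over all $x\in\BRd$, and it does not use convexity at all). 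For existence of a maximal element the paper invokes Zorn's Lemma, while you simply take an element of maximal dimension and deduce $L\subseteq S_{i}$ from $\dim(S_{i}+L)=\dim S_{i}$; in the finite-dimensional setting your route is more elementary and makes explicit the containment of \emph{every} element of $\CL_{i}$ in $S_{i}$, which in the paper is only implicit in the uniqueness-of-maximal-elements argument.
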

\vspace*{0.15cm}

\begin{proof}
    First, note that $\CL_{i}$ is always non-empty, because for every $x\in\BX$ and every $\de\in
    \Delta$ the level set $F_{i}(x,\de)$ includes the origin by its definition in
    \eqref{Equ:LevelSets}. Therefore $\CL_{i}$ contains (at least) the trivial subspace $\{0\}$.

    Second, since every chain in $\CL_{i}$ has an upper bound (namely $\BRd$), \emph{Zorn's Lemma}
    (or the \emph{Axiom of Choice}, \cf \cite[p.\,50]{Boll:1999}) implies that $\CL_{i}$ has at
    least one maximal element in the `$\preceq$'-sense.

    Third, in order to prove that the maximal element is unique, suppose that $L_{i}^{(1)},
    L_{i}^{(2)}$ are two maximal elements of $\CL_{i}$.
    It will be shown that their direct sum $L_{i}^{(1)}\oplus L_{i}^{(2)}\in\CL_{i}$, so that
    $L_{i}^{(1)}\neq L_{i}^{(2)}$ would contradict their maximality.
    According to \eqref{Equ:SubSpace}, it must be shown that $L_{i}^{(1)}\oplus L_{i}^{(2)}
    \subset F_{i}(x,\de)$ for any fixed values $x\in\BX$ and $\de\in\Delta$. To see this, pick
    \begin{equation*}
        \xi\in L_{i}^{(1)}\oplus L_{i}^{(2)}\quad\Longrightarrow\quad\xi=\xi^{(1)}+\xi^{(2)}
        \quad\text{for}\enspace\xi^{(1)}\in L_{i}^{(1)},\:\xi^{(2)}\in L_{i}^{(2)}\ef
    \end{equation*}
    Then apply \eqref{Equ:LevelSets} twice to obtain
    \begin{equation*}
        f_{i}(x+\xi^{(1)}+\xi^{(2)},\de)=f_{i}(x+\xi^{(1)},\de)=f_{i}(x,\de)\ec
    \end{equation*}
    because $\xi^{(2)}\in L_{i}^{(2)}$ and $\xi^{(1)}\in L_{i}^{(1)}$.
\end{proof}
\vspace*{0.15cm}

\subsection{The Support Rank Lemma}\label{Sec:RankLemma}

The following lemma provides the link between the support rank of a chance constraint and its
support dimension.

\vspace*{0.15cm}
\begin{lemma}[Support Rank]\label{The:RankLemma}
	Suppose that a chance constraint $i\in\BN_{1}^{N}$ has the support rank $\sri\in\BN_{1}^{d}$.
	Then its support dimension in the $\MSP$ is bounded by $\sdi\leq\sri$.
\end{lemma}
\vspace*{0.15cm}

\begin{proof}
    Without loss of generality, the proof is given for the first chance constraint $i=1$.
    Pick any random multi-sample $\bom\in\Delta^{K}$ (less any $\Pr^{K}$-null set for which the
    support rank condition may not hold).

    By the assumption, there exists a linear subspace $L_{1}\subset\BRd$ of dimension $d-\sr_{1}$
    for which
    \begin{equation*}
        f_{1}(x+\xi)=f_{1}(x)\qquad\fa x\in\BX,\:\:\fa\xi\in L_{1}\ef
    \end{equation*}
	The  orthogonal complement of $L_{1}$, $L_{1}^{\perp}$, is also a linear subspace of $\BRd$ with
	dimension $\sr_{1}$, and every vector in $\BRd$ can be uniquely written as the orthogonal sum of
	vectors in $L_{1}$ and $L_{1}^{\perp}$, \cf \cite[p.\,135]{Boll:1999}.
	
    For the sake of a contradiction, suppose that $i=1$ contributes more than $\sr_{1}$ support
    constraints to the resulting $\DSP$, \ie $|\bSc_{1}|\geq\sr_{1}+1$.
	For any $\kappa_{1}\in\bSc_{1}$, let
    \begin{equation*}
		\bxo_{\kappa_{1}}:=
		\bxo\bigl(\bom^{(1)}\setminus\{\bde^{(1,\kappa_{1})}\},\bom^{(2)},...,\bom^{(N)}\bigr)
    \end{equation*}
	be the solution obtained if this support constraint is omitted.
	By Definition \ref{Def:SupConstr}, if a support constraint is omitted from $\DSP$, its solution
	moves away from $\bxo_{0}$, \ie $\bxo_{0}\neq\bxo_{\kappa_{1}}$ for all $\kappa_{1}\in\bSc_{1}$.
	Denote the collection of all solutions by
	\begin{equation*}
		X:=\bigl\{\bxo_{\kappa_{1}}\:\big|\:\kappa_{1}\in\bSc_{1}\bigr\}\cup\{\bxo_{0}\}\ec
	\end{equation*}
	so that $|X|\geq\sr_{1}+2$.
	Observe that each $\bxo_{\kappa_{1}}$ is feasible with respect to all constraints of the $\DSP$,
	except for the one generated by $\de^{(1,\kappa_{1})}$, which is necessarily violated according
	to Definition \ref{Def:SupConstr}.
	
    Since $\BRd$ is the orthogonal direct sum of $L_{1}$ and $L_{1}^{\perp}$, for each point in $X$
    there is a unique orthogonal decomposition of
    \begin{equation*}
        \bxo_{\kappa_{1}}=v_{\kappa_{1}}+w_{\kappa_{1}}\ec\qquad\text{where}\enspace v_{\kappa_{1}}
        \in L_{1},\enspace w_{\kappa_{1}}\in L_{1}^{\perp}\ec
    \end{equation*}
    where $\kappa_{1}\in\bSc_{1}\cup\{0\}$.
    Consider the set
    \begin{equation*}
        W:=\bigl\{w_{\kappa_{1}}\:\big|\:\kappa_{1}\in\bSc_{1}\cup\{0\}\bigr\}\ef
    \end{equation*}
    By the hypothesis, $W$ contains at least $\sr_{1}+2$ distinct points in the
    $\sr_{1}$-dimensional subspace $L_{1}^{\perp}$.
    According to Radon's Theorem \cite[p.\,151]{Ziegler:2007}, $W$ can be split into two disjoint
    subsets, $W_{A}$ and $W_{B}$, such that there exists a point $\tilde{w}$ in the intersection of
    their convex hulls:
    \begin{equation}\label{Equ:WInConvHull1}
        \tilde{w}\in \conv\bigl\{W_{A}\bigr\}\cap \conv\bigl\{W_{B}\bigr\}\ef
    \end{equation}
    Split the indices in $\bSc_{1}\cup\{0\}$ correspondingly into $I_{A}$ and $I_{B}$, and observe
    that every $w_{A}\in W_{A}$ satisfies the constraints in $I_{B}$:
    \begin{equation*}
        f_{1}\bigl(w_{A},\bde^{(1,\kappa_{1})}\bigr)\leq 0\quad\fa\kappa_{1}\in I_{B}
        \qquad\Longrightarrow\qquad
        f_{1}\bigl(\tilde{w},\bde^{(1,\kappa_{1})}\bigr)\leq 0\quad\fa\kappa_{1}\in I_{B}\ef
    \end{equation*}
    The last implication follows because $\tilde{w}\in\conv\{W_{A}\}$ and $f_{1}(\,\cdot\,,
    \bde^{(1,\kappa_{1})})$ is convex.
    Similarly, every point $w_{B}\in W_{B}$ satisfies the constraints in $I_{A}$:
    \begin{equation*}
        f_{1}\bigl(w_{B},\bde^{(1,\kappa_{1})}\bigr)\leq 0\quad\fa\kappa_{1}\in I_{A}
        \qquad\Longrightarrow\qquad
        f_{1}\bigl(\tilde{w},\bde^{(1,\kappa_{1})}\bigr)\leq 0\quad\fa\kappa_{1}\in I_{A}\ef
    \end{equation*}
    Combining both statements thus yields
    \begin{equation}\label{Equ:ConstrSatisfaction}
        f_{1}(\tilde{w},\bde^{(1,\kappa_{1})})\leq 0\qquad\fa\kappa_{1}\in\bSc_{1}\ef
    \end{equation}

    According to \eqref{Equ:WInConvHull1}, $\tilde{w}$ can be expressed as a convex combination of
    elements in $W_{A}$ or $W_{B}$.
    Splitting the points in $X$ into $X_{A}$ and $X_{B}$ correspondingly and applying the same
    convex combination yields some
    \begin{equation}\label{Equ:XInConvHull}
        \tilde{x}\in\conv\bigl\{X_{A}\bigr\}\cap\conv\bigl\{X_{B}\bigr\}\ec
    \end{equation}
    and thereby also some $\tilde{v}\in L_{1}$ with $\tilde{x}=\tilde{v}+\tilde{w}$.
	
    To establish the contradiction two things remain to be verified: first that $\tilde{x}$ is
    feasible with respect to all constraints, and second that it has a lower cost (or a better
    tie-break value) than $\bxo_{0}$.
    For the first, $\tilde{x}\in\BX$ because all points of $X$ lie in $\BX$ and $\tilde{x}\in
    \conv\{X\}$. Moreover, thanks to \eqref{Equ:ConstrSatisfaction},
    \begin{equation*}
        f_{1}\bigl(\tilde{x},\bde^{(1,\kappa_{1})}\bigr)=f_{1}\bigl(\tilde{w},\bde^{(1,\kappa_{1})}
        \bigr)\leq 0\qquad\fa\kappa_{1}\in\bSc_{1}\ef
    \end{equation*}
    For the second, pick the set from $X_{A}$ and $X_{B}$ that does not contain $\bxo_{0}$; without
    loss of generality, say this is $X_{A}$.
    By construction, all elements of $X_{A}$ have a strictly lower objective function value (or at
    least a better tie-break value) than $\bxo_{0}$.
    By linearity this also holds for all points in $\conv\{X_{A}\}$, where $\tilde{x}$ lies
    according to \eqref{Equ:XInConvHull}.
\end{proof}

\vspace*{0.15cm}
\begin{remark}[Support Rank versus Support Dimension]\label{Rem:SuppDim}
    While the support rank $\sri$ is a property of chance constraint $i$ alone, the support
    dimension $\sdi$ may depend on the overall setup of the $\MSP$.
    The support dimension $\sdi$ constitutes the relevant basis for selecting the sample size
    $K_{i}$. However, it may be difficult to determine for practical problems, as it may depend on
    the interactions of multiple chance constraints (see Example \ref{Exa:SuppDim} below).
    The support rank $\sri$ provides an easier-to-handle upper bound to $\sdi$, which can be used in
    place of $\sdi$ for selecting $K_{i}$.
\end{remark}
\vspace*{0.15cm}

\begin{example}[Upper Bounding of Support Dimension]\label{Exa:SuppDim}
	\textnormal{To illustrate the statements in Remark \ref{Rem:SuppDim}, consider a small example
	of \eqref{Equ:MCP} in dimension $d=3$.
	Let $\BX=[-1,1]^{3}$ be the unit cube, $c\tp=[0\,1\,1]$ with a lexicographic tie-break rule, and
	two chance constraints $i=1,2$. Both constraints affect only the first and second coordinates
	$x_{1}$ and $x_{2}$, leaving the choice of $x_{3}=-1$ for the third coordinate.
	For $i=1$, the constraints are parallel hyperplanes constraining $x_{1}$ from below, where the
	lower bound is given by the first uncertainty $\de_{1}$:
	\begin{equation*}
		f_{1}(x,\de) = -x_{1}+\de_{1}\ef
	\end{equation*}
	For $i=2$, the constraints are V-shaped, with the vertex located at $x_{1}=-\de_{2}$
	and $x_{2}=-1$:
	\begin{equation*}
		f_{2}(x,\de) = \bigl|x_{1}+\de_{2}\bigr| - x_{2} - 1\ef
	\end{equation*}
	Both uncertainties $\de:=\{\de_{1},\de_{2}\}$ are uniformly distributed on the interval $[0,1]$.
	The setup is illustrated in Figure \ref{Fig:SuppDim}.}
	
	\textnormal{In this case, the support dimensions are $\sd_{1}=1$, $\sd_{2}=1$ and the support
	ranks are  $\sr_{1}=1$, $\sr_{2}=2$ for the constraints $i=1,2$. Notice that for $i=2$ the
	support rank is strictly greater that its support dimension, due to the presence of constraint
	$1$. Hence there is some conservatism in the upper bound, although both bounds are better than
	the existing upper bound by the dimension of the decision space $d=3$ \cite[Thm.\,2]{CalaCamp:2005}.}
\end{example}
\vspace*{0.15cm}

\begin{figure}[H]
    \centering
    \begin{pspicture}(-40,-40)(30,30)
            \stl
        \psline[arrowsize=4.5pt]{->}(-30,0)(30,0)
        \rput[bl](26,-5){$x_{1}$}
        \psline[arrowsize=4.5pt]{->}(0,-30)(0,30)
        \rput[bl](-5,26){$x_{2}$}
        \psline[linestyle=dashed,dash=1pt 1pt](-25,-25)(-25,25)(25,25)(25,-25)(-25,-25)
        \rput[bl](-29,23){$\BX$}
        \psline[linewidth=2pt]{->}(30,30)(30,22)
        \rput[bl](31,26.5){$-c$}
        \mel
        \psdots*[dotstyle=*,dotsize=2pt](4,0)
        \psline(4,-26)(4,26)
        \pscustom[linestyle=none,fillstyle=hlines,hatchwidth=0.2pt,hatchsep=3pt,hatchangle=-30]{
        	\pspolygon(1.18,-26)(4,-26)(4,26)(1.18,26)(1.18,-26)
			}
        \psdots*[dotstyle=*,dotsize=2pt](10,0)
        \psline(10,-26)(10,26)
        \pscustom[linestyle=none,fillstyle=hlines,hatchwidth=0.2pt,hatchsep=3pt,hatchangle=-30]{
        	\pspolygon(7.18,-26)(10,-26)(10,26)(7.18,26)(7.18,-26)
			}
	    \rput[bl](2.5,-30){$i=1$}
        \psdots*[dotstyle=*,dotsize=2pt](-19,-25)
        \psline(-28,-16)(-19,-25)
        \psline(-19,-25)(26,20)
        \pscustom[linestyle=none,fillstyle=hlines,hatchwidth=0.2pt,hatchsep=3pt,hatchangle=-70]{
        	\pspolygon(-28,-16)(-19,-25)(26,20)(28,18)(-19,-27.82)(-30,-18)(-28,-16)
			}
	    \psdots*[dotstyle=*,dotsize=2pt](-11,-25)
	    \psline(-26,-10)(-11,-25)
        \psline(-11,-25)(28,14)
        \pscustom[linestyle=none,fillstyle=hlines,hatchwidth=0.2pt,hatchsep=3pt,hatchangle=-70]{
        	\pspolygon(-26,-10)(-11,-25)(28,14)(30,12)(-11,-27.82)(-28,-12)(-26,-10)
			}
	    \rput[bl](-36,-15){$i=2$}
    \end{pspicture}
    \caption{Illustration of Example \ref{Exa:SuppDim}. The plot shows a projection on the $x_{1},
    x_{2}$-plane for $x_{3}=-1$. The unit box $\BX$ is depicted by a dotted line. Two (possible)
    samples are shown for the linear constraint $i=1$ ($x_{1}\geq\de_{1}$) and for the V-shaped
    constraint $i=2$ ($x_{2}\geq\bigl|x_{1}+\de_{2}\bigr|-1$).\label{Fig:SuppDim}}
\end{figure}
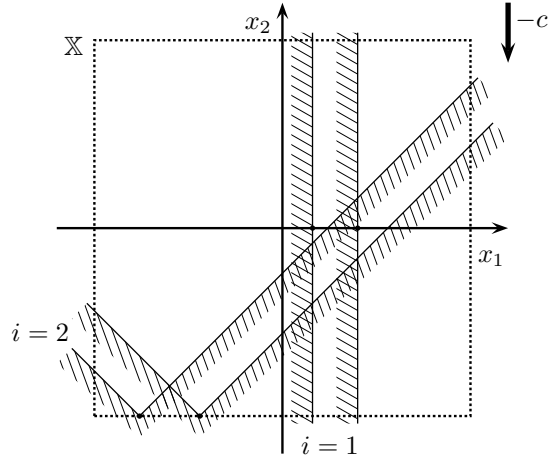

\section{Feasibility of the Scenario Solution}\label{Sec:ScenSol}

In the first part of this section, it is shown that for a proper choice of the sample sizes
$K_{1},...,K_{N}$ the scenario solution $\xo\bigl(\om^{(1)},...,\om^{(N)}\bigr)$ is an
approximate solution of the $\MCP$ (\ie it is a feasible point of each chance constraint $i=1,...,N$
in (\ref{Equ:MCP}b) with a high confidence $(1-\theta_{i})$).
In the second part of this section, an explicit formula for computing the sample sizes $K_{1},...,
K_{N}$ for given residual probabilities $\theta_{i}$ is provided.

\subsection{The Sampling Theorem}\label{Sec:MainThe}

Denote by $\B(\cdot\,;\cdot,\cdot)$ the beta distribution function, \cf \cite[p.\,26.5.3,\,26.5.7]{Abramowitz:1970}:
\begin{equation}\label{Equ:BinDistr}
    \B(\ep;n,K):=\sum_{j=0}^{n}{K\choose j}\ep^{j}(1-\ep)^{K-j}\ef
\end{equation}

\vspace*{0.15cm}
\begin{theorem}[Sampling Theorem]\label{The:Sampling}
    Consider problem \eqref{Equ:MSP} under Assumptions \ref{Ass:Convexity}, \ref{Ass:Uniqueness},
    \ref{Ass:Independence}, \ref{Ass:Feasibility}, \ref{Ass:SampleSize}. Then
    \begin{equation}\label{Equ:Sampling}
	    \Pb^{K}\bigl[\V_{i}(\om^{(1)},...,\om^{(N)})>\ep_{i}\bigr]\leq\B(\ep_{i};\sri-1,K_{i})\ec
    \end{equation}
    for each chance constraint $i\in\BN_{1}^{N}$, whose support rank is $\sri$.
\end{theorem}
\vspace*{0.15cm}

\begin{proof}
    The result is an extension of \cite[Thm.\,2.4]{CampGar:2008} for the classic scenario approach,
    which is also used as a basis for this proof.\footnote{The authors thank an anonymous reviewer
    for his/her helpful suggestions on simplifying the proof.}

    Without loss of generality, consider the first chance constraint $i=1$; the result for the other
    chance constraints $i=2,...,N$ follows analogously.
    Consider the conditional probability
    \begin{equation}\label{Equ:CondProb}
        \Pb^{K}\bigl[\V_{1}(\om^{(1)},...,\om^{(N)})>\ep_{1}\:\big|\:
        \om^{(2)},...,\om^{(N)}\bigr]\ec
    \end{equation}
    \ie the probability of drawing $\om^{(1)}$ such that $\xo(\om^{(1)},...,\om^{(N)})$ has a
    probability of violating `$f_{1}(\,\cdot\,,\den)\leq 0$' that is higher than $\ep_{1}$, given
    fixed values for the other samples $\om^{(2)},...,\om^{(N)}$.

    Clearly, the quantity in \eqref{Equ:CondProb} generally depends on the multi-samples $\om^{(2)},
    ...,\om^{(N)}$. However, for $\Pb^{K_{2}+...+K_{N}}$-almost every value of these multi-samples
    \eqref{Equ:CondProb} can be bounded by
    \begin{equation}\label{Equ:CondProbBound}
        \Pb^{K}\bigl[\V_{1}(\om^{(1)},...,\om^{(N)})>\ep_{1}\:\big|\:\om^{(2)},...,\om^{(N)}\bigr]
        \leq \B(\ep_{1};\sr_{1}-1,K_{1})\ef
    \end{equation}
    Indeed, by Assumption \ref{Ass:Convexity}, for $\Pb^{K_{2}+...+K_{N}}$-almost every
    $\om^{(2)},...,\om^{(N)}$ the function $\tilde{f}:\BRd\to\BR$ defined by
    \begin{equation*}
    	\tilde{f}(x)\equiv \max_{i\in\BN_{2}^{N}}\max_{\kappa_{i}\in\BN_{1}^{K_{i}}}
	    f_{i}\bigl(x,\de^{(i,\kappa_{i})}\bigr)
    \end{equation*}
    is convex, as it is the point-wise maximum of convex functions.
    Then all sampled constraints of $i=2,...,N$ can be expressed as the deterministic convex
    constraint `$\tilde{f}(x)\leq 0$', which can be considered as part of the convex set $\BX$.
    Thus for $\Pb^{K_{2}+...+K_{N}}$-almost every $\om^{(2)},...,\om^{(N)}$ the problem takes the
    form of a classic $\SCP$, to which the results of \cite{CampGar:2008} apply.
    In particular, \cite[Thm.\,2.4]{CampGar:2008} yields \eqref{Equ:CondProbBound} for
    $\Pb^{K_{2}+...+K_{N}}$-almost every $\om^{(2)},...,\om^{(N)}$.

    The difference from using the support rank $\sr_{1}$ in place of the optimization dimension $d$
    in \cite[Thm.\,2.4]{CampGar:2008} is minor.
    The key fact is that $\sr_{1}$ provides an upper bound for the number of support constraints
    contributed by constraint $1$, according to Lemma \ref{The:RankLemma}, and hence it can replace
    $d$ in \cite[Prop.\,2.2]{CampGar:2008} and all subsequent results.

    The final result is obtained by deconditioning the probability in \eqref{Equ:CondProb}:
    \begin{subequations}\begin{align*}
        \Pb^{K}&\bigl[\V_{1}(\om^{(1)},...,\om^{(N)})>\ep_{1}\bigr]=\\
        &=\int_{\om^{(2)},...\om^{(N)}}
        \Pb^{K}\bigl[\V_{1}(\om^{(1)},...,\om^{(N)})>\ep_{1}\:\big|\:\om^{(2)},...,\om^{(N)}\bigr]
        \Pb^{K_{2}}\bigl[\di\om^{(2)}\bigr]...\Pb^{K_{N}}\bigl[\di\om^{(N)}\bigr]\\
        &\leq\int_{\om^{(2)},...\om^{(N)}}
        \Phi(\ep_{1};\sr_{1}-1,K_{1})
        \Pb^{K_{2}}\bigl[\di\om^{(2)}\bigr]...\Pb^{K_{N}}\bigl[\di\om^{(N)}\bigr]\\
        &=\Phi(\ep_{1};\sr_{1}-1,K_{1})\ec
    \end{align*}\end{subequations}
    based on \cite[pp.\,183,222]{Shir:1996}, where the third line uses \eqref{Equ:CondProbBound}.
\end{proof}

\subsection{Explicit Bounds on the Sample Sizes}\label{Sec:Chernoff}

Formula \eqref{Equ:Sampling} in Theorem \ref{The:Sampling} ensures that with a
\emph{confidence level} of $1-\B(\ep_{i};\sr_{i}-1,K_{i})$, the violation probability
$\V_{i}(\om^{(1)},...,\om^{(N)})\leq\ep_{i}$.
However, in practical applications a given confidence level $(1-\theta_{i})\in (0,1)$ is often
imposed, while an appropriate sample size $K_{i}$ has to be identified.

The most accurate way of finding this sample size is by observing that
$\B(\ep_{i};\sr_{i}-1,K_{i})$ is a monotonically decreasing function in $K_{i}$ and applying a
numerical procedure (\eg regula falsi) for computing the smallest sample size that ensures
$\B(\ep_{i};\sr_{i}-1,K_{i})\leq\theta_{i}$.
The resulting $K_{i}$ shall be referred to as the \emph{implicit bound} on the sample size.

For a qualitative analysis of the behavior of this implicit bound as $\ep_{i}$ and $\theta_{i}$ vary
(and also for a good initialization of the regula falsi procedure), it is useful to derive an
\emph{explicit bound} on the sample size $K_{i}$.
Since formula \eqref{Equ:Sampling} cannot be readily inverted, the beta distribution function
must first be controlled by some upper bound, which is then inverted.

A straightforward approach is to use a Chernoff bound \cite{Cher:1952}, as shown in \cite[Rem.\,2.3]{Cala:2009} and \cite[Sec.\,5]{Cala:2010}. This provides a simple explicit formula for $K_{i}$:
\begin{equation}\label{Equ:ExpBound1}
	K_{i}\geq\displaystyle\frac{2}{\ep_{i}}
	\left[\log\Bigl(\frac{1}{\theta_{i}}\Bigr)+\sr_{i}-1\right]\ec
\end{equation}
where $\log(\cdot)$ denotes the natural logarithm.
As shown in \cite[Cor.\,1]{AlamoEtAl:2010}, this can be further improved to a better, albeit more
complicated bound for $K_{i}$:
\begin{equation}\label{Equ:ExpBound2}
	K_{i}\geq\displaystyle\frac{1}{\ep_{i}}
	\left[\log\Bigl(\frac{1}{\theta_{i}}\Bigr)+
	\sqrt{2(\sr_{i}-1)\log\Bigl(\frac{1}{\theta_{i}}\Bigr)}+\sr_{i}-1\right]\ef
\end{equation}

\section{The Sampling-and-Discarding Approach}\label{Sec:DiscardConstr}

The sampling-and-discarding approach has previously been proposed for the classic scenario approach
\cite{Cala:2010,CampGar:2011}; this section describes its extension to problems with multiple chance
constraints.

The fundamental goal is to reduce the objective value of the scenario solution, while maintaining
the same confidence levels for feasibility with respect to the chance constraints (see Section
\ref{Sec:SCP}).
To this end, the sample sizes $K_{i}$ are deliberately increased above the bounds derived in
Section \ref{Sec:ScenSol}, in exchange for allowing a certain number of $R_{i}$ sampled constraints
to be discarded \emph{a posteriori}, \ie after the outcomes of the samples have been observed.

In this section, first the possible procedures for discarding constraints are recalled. Second, the
main result on the sampling-and-discarding approach for the $\MCP$ is stated. It provides an
implicit formula for the selection of appropriate sample-and-discarding pairs $(K_{i},R_{i})$, which
may again vary for different chance constraints $i=1,...,N$. Third, explicit bounds for the choice
of pairs $(K_{i},R_{i})$ are provided.

\subsection{Constraint Discarding Procedure}\label{Sec:DiscardProc}

For each chance constraint of the $\MCP$, if $R_{i}\geq 0$ sampled constraints are to be discarded a
posteriori, the discarding procedure is performed by a pre-defined \emph{(sample) removal
algorithm}.

\vspace*{0.15cm}
\begin{definition}[Removal Algorithm]\label{Def:RemAlg}
    For each chance constraint $i=1,...,N$, the \emph{(sample) removal algorithm}
    $\CA_{i}^{(K_{i},R_{i})}:\Om\to\tOmi$ is a deterministic function on the overall multi-sample
    $\om\in\Om$.
    It returns a subset of samples $\tom^{(i)}\in\tOmi$, in which $R_{i}$ out of the $K_{i}$ samples
    in $\om^{(i)}\in\Omi$ have been removed.
\end{definition}
\vspace*{0.15cm}

Obviously, the algorithm should aim at improving the objective value from
$\MSP[\om^{(1)},...,\om^{(N)}]$ to $\MSP[\tom^{(1)},...,\tom^{(N)}]$ as much as possible.
Various possible removal algorithms are described in \cite[Sec.\,5.1]{Cala:2010}, and further
references are found in \cite[Sec.\,2]{CampGar:2011}.
Brief descriptions of the most important removal algorithms are listed below.

\vspace*{0.15cm}
\begin{example}\label{Exa:ConstrRem}
    (a) Optimal Constraint Removal. \textnormal{The best improvement of the objective function value
    is achieved by solving the reduced problem for all possible ways of removing $R_{i}$ of the
    $K_{i}$ samples.
    However, a major drawback of this removal algorithm is its combinatorial complexity. Therefore
    the algorithm becomes computationally intractable for larger values of $R_{i}$, in particular
    when samples have to be removed for multiple constraints.}

    (b) Greedy Constraint Removal. \textnormal{Starting by solving the $\MSP[\om^{(1)},...,
    \om^{(N)}]$ for all $K_{i}$ samples, the $R_{i}$ samples are removed in $R_{i}$ sequentially
    steps. In each step, a single sample is removed by the optimal constraint removal procedure.
    Between multiple constraints $i$, the removal algorithm can either proceed in a fixed order or
    again greedy-based. For most practical problems this algorithm can be expected to work almost
    as good as (a), while carrying a much lower computational burden.}

    (c) Marginal Constraint Removal. \textnormal{The $R_{i}$ samples are removed in $R_{i}$
    sequential steps, where the removed sample in each step is selected according to the highest
    Lagrange multiplier.
    Compared to the greedy constraint removal, the decision is thus based on the highest marginal
    cost improvement \cite[Cha.\,5]{BoydVan:2004}), instead of the highest total cost improvement.
    In the case of multiple constraints $i$, the removal algorithm can either handle them all
    together, or proceed sequentially.}
\end{example}
\vspace*{0.15cm}

The existing theory for the $\SCP$ \cite[Sec.\,4.1.1]{Cala:2010} and \cite[Ass.\,2.2]{CampGar:2011}
assumes that all of the removed constraints are violated by the relaxed scenario solution.

\vspace*{0.15cm}
\begin{assumption}[Violation of Discarded Constraints]\label{Ass:DiscViol}
    Every chance constraint $i\in\BN_{1}^{N}$ with $R_{i}>0$ satisfies the following condition: for
    almost every $\om\in\Om$, each of the constraints discarded by the removal algorithm
    $\CA_{i}^{(K_{i},R_{i})}(\om)$ is violated by the solution of the reduced problem, \ie
    \begin{equation}\label{Equ:DiscViol}
         f_{i}\bigl(\xo(\tom^{(1)},...,\tom^{(N)}),\de^{(i,\kappa_{i})}\bigr)>0\qquad
         \fa\de^{(i,\kappa_{i})}\in\bigl(\om\setminus\tom\bigr)\ef
    \end{equation}
\end{assumption}
\vspace*{-0.2cm}

While Assumption \ref{Ass:DiscViol} is sufficient for the $\MCP$ as well, it may turn out to be
too restrictive for some problem instances.
In fact, due to the interplay of multiple chance constraints, it may not be possible to find $R_{i}$
constraints that are violated by the relaxed scenario solution (this situation may also occur for a
single chance constraint, in the presence of a deterministic constraint set $\BX$).
In this case, the \emph{monotonicity property}, as introduced below, provides a possible
alternative.

\vspace*{0.15cm}
\begin{definition}[Monotonicity Property]\label{Def:Monotonicity}
    A chance constraint $i\in\BN_{1}^{N}$ is called \emph{monotonic} if for all $K_{i}\in\BN$ and
    almost every $\om^{(i)}\in\Omi$ the following condition holds:
    Every point in the feasible set of sampled instances of chance constraint $i$,
    \begin{equation}\label{Equ:DefMono1}
        \BX_{i}(\om^{(i)}):=\bigl\{\xi\in\bBR^{d}\:\big|\:f_{i}(\xi,\de^{(i,\kappa_{i})})\leq 0
        \:\:\:\fa\kappa_{i}\in\BN_{1}^{K_{i}}\bigr\}\ec
    \end{equation}
    where $\bBR:=\BR\cup\{\pm\infty\}$, is violated by a new sampled constraint only if also the
    optimal point in $\BX_{i}(\om^{(i)})$,
    \begin{equation}\label{Equ:DefMono2}
        \xo_{i}(\om^{(i)}):=\argmin\bigl\{c\tp\xi\:\big|\:\xi\in\BX_{i}(\om^{(i)})\bigr\}
    \end{equation}
    is violated.
    In other words, for every $\xi\in\BX_{i}(\om^{(i)})$ and almost every $\den\in\Delta$,
    \begin{equation}\label{Equ:DefMono3}
        f_{i}\bigl(\xi,\den\bigr)>0\qquad\Longrightarrow\qquad
        f_{i}\bigl(\xo_{i}(\om^{(i)}),\den\bigr)>0\ef
    \end{equation}
\end{definition}
\vspace*{-0.3cm}

\begin{assumption}[Monotonicity of Chance Constraints]\label{Ass:Monotonicity}
    Every chance constraint $i\in\BN_{1}^{N}$ enjoys the \emph{monotonicity property}.
\end{assumption}
\vspace*{0.15cm}

Definition \eqref{Def:Monotonicity} is easy to check for most practical problems, without involving
any calculations. The following example illustrates the intuition behind this concept.

\vspace*{0.15cm}
\begin{example}[Monotonic Chance Constraints]\label{Exa:Monotonicity}
    \textnormal{Consider an $\MSP$ in $d=2$ dimensions, where $\BX=[-100,100]^{2}\subset\BR^{2}$ and
    $c=[\:0\:\:1\:]\tp$, $\den=[\den_{1}\:\den_{2}\:\den_{3}]$ belongs to $\Delta=\{-1,1\}\times
    [-1,1]\times [-1,1]$, and there are $N=2$ chance constraints.}

    (a) Monotonic Chance Constraint. \textnormal{Let the first chance constraint $i=1$ be of the
    linear form}
    \begin{equation*}
        \begin{bmatrix}
	       \de^{(1,\kappa_{1})}_{1} &1
	    \end{bmatrix}
	    x-\de^{(1,\kappa_{1})}_{2}\leq 0\qquad
        \fa\kappa_{1}=1,...,K_{1}\ef
    \end{equation*}
    \textnormal{Observe that for any number $K_{1}\in\BN$ and every possible sample values
    $\om^{(1)}$, an additional sample $\den$ either cuts off no point from $\BX_{1}(\om^{(1)})$,
    or the the point $\xo_{1}(\om^{(1)})$ becomes infeasible.
    This fact is illustrated in Figure \ref{Fig:Monotonicity}(a).
    Therefore chance constraint $i=1$ enjoys the monotonicity property.}

    (b) Non-Monotonic Chance Constraint. \textnormal{Let the second chance constraint $i=2$ be of
    the linear form}
     \begin{equation*}
     	\begin{bmatrix}
	       \de^{(2,\kappa_{2})}_{2} &1
	    \end{bmatrix}
	    x-\de^{(2,\kappa_{2})}_{3}\leq 0\qquad
        \fa\kappa_{2}=1,...,K_{2}\ec
    \end{equation*}
    \textnormal{Observe that for any number $K_{2}$ there exist sample values $\om^{(2)}$ that make
    it possible for a new sample $\den$ to cut off some previously feasible point from
    $\BX_{2}(\om^{(2)})$, without rendering the point $\xo_{2}(\om^{(2)})$ infeasible.
    A possible configuration of this type is depicted in Figure \ref{Fig:Monotonicity}(b).
    Therefore chance constraint $i=2$ does not enjoy the monotonicity property.}
\end{example}
\vspace*{0.15cm}

\begin{figure}[H]
    \centering
    \subfloat[b][Monotonic Chance Constraint.]{
    \parbox[c][50mm]{55mm}{\centering
    \begin{pspicture}(0,0)(50,55)
        \psline[linewidth=2pt]{->}(15,50)(15,42)
        \rput[bl](16,45.5){$-c$}
        \mel
        \psline(2,32)(32,2)
        \pscustom[linestyle=none,fillstyle=hlines,hatchwidth=0.2pt,hatchsep=3pt,hatchangle=30]{
        	\pspolygon(2,32)(32,2)(30,0)(0,30)
			}
	    \psline(2,42)(34,10)
        \pscustom[linestyle=none,fillstyle=hlines,hatchwidth=0.2pt,hatchsep=3pt,hatchangle=30]{
        	\pspolygon(2,42)(34,10)(32,8)(0,40)
			}
		\psline(48,45)(16,11)
        \pscustom[linestyle=none,fillstyle=hlines,hatchwidth=0.2pt,hatchsep=3pt,hatchangle=-30]{
        	\pspolygon(48,45)(16,11)(18,9)(50,43)
			}
	    \psline(48,32)(19,3)
        \pscustom[linestyle=none,fillstyle=hlines,hatchwidth=0.2pt,hatchsep=3pt,hatchangle=-30]{
        	\pspolygon(48,32)(19,3)(21,1)(50,30)
			}	
	    \bol
        \psline(2,48)(36.5,13.5)
        \pscustom[linestyle=none,fillstyle=hlines,hatchwidth=0.2pt,hatchsep=3pt,hatchangle=30]{
        	\pspolygon(2,48)(36.5,13.5)(34.5,11.5)(0,46)
			}
    \end{pspicture}}}
    \subfloat[b][Non-Monotonic Chance Constraint.]{
    \parbox[c][50mm]{55mm}{\centering
    \begin{pspicture}(0,3)(50,50)
        \psline[linewidth=2pt]{->}(15,50)(15,42)
        \rput[bl](16,45.5){$-c$}
        \mel
		\psline(48,48)(16,14)
        \pscustom[linestyle=none,fillstyle=hlines,hatchwidth=0.2pt,hatchsep=3pt,hatchangle=-30]{
        	\pspolygon(48,48)(16,14)(18,12)(50,46)
			}
	    \psline(2,35.5)(45,25)
        \pscustom[linestyle=none,fillstyle=hlines,hatchwidth=0.2pt,hatchsep=3pt,hatchangle=-60]{
        	\pspolygon(2,35.5)(45,25)(44.33,22.25)(1.33,32.75)
			}
		\psline(3,25.4)(40,2.4)
        \pscustom[linestyle=none,fillstyle=hlines,hatchwidth=0.2pt,hatchsep=3pt,hatchangle=30]{
        	\pspolygon(3,25.4)(40,2.4)(38.5,0)(1.5,23)
			}
		\psline(4,5)(46,15)
        \pscustom[linestyle=none,fillstyle=hlines,hatchwidth=0.2pt,hatchsep=3pt,hatchangle=-60]{
        	\pspolygon(4,5)(46,15)(46.65,12.25)(4.65,2.25)
			}
	    \bol
        \psline(2,45)(33,14)
        \pscustom[linestyle=none,fillstyle=hlines,hatchwidth=0.2pt,hatchsep=3pt,hatchangle=30]{
        	\pspolygon(2,45)(33,14)(31,12)(0,43)
			}
    \end{pspicture}}}
    \caption{Illustration of Example \ref{Exa:Monotonicity}. Non-bold constraints are generated by
    the multi-sample $\om^{(i)}\in\Delta^{K_{i}}$ of chance constraint $i=1,2$; bold constraints are
    generated by the uncertainty $\den\in\Delta$.
    In (b) a feasible point is made infeasible without affecting the optimum, which is not possible
    in the case of (a).\label{Fig:Monotonicity}}
\end{figure}
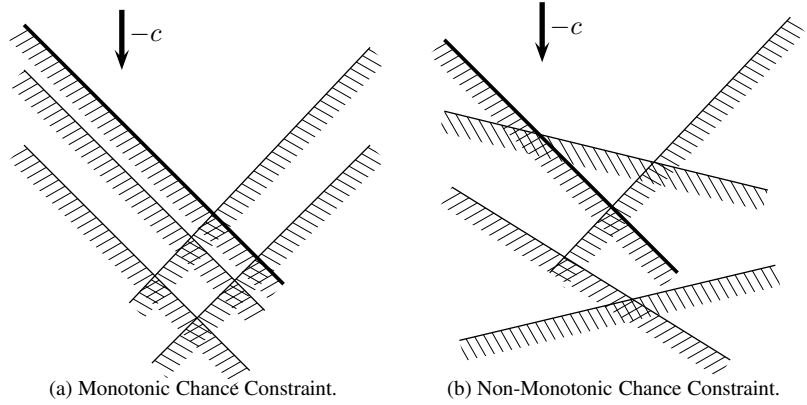

The usefulness of the monotonicity property is based on the following result, whose proof is an
straightforward consequence of Definition \ref{Def:Monotonicity} and therefore omitted.

\vspace*{0.15cm}
\begin{lemma}\label{The:Monotonicity}
    Let $K_{i}\in\BN$ and $R_{i}\leq K_{i}$.
    Suppose chance constraint $i\in\BN_{1}^{N}$ of $\MCP$ is monotonic and the removal algorithm
    $\CA_{i}^{(K_{i},R_{i})}$ is sequential. Then for almost every $\om^{(i)}\in\Delta^{K_{i}}$ the
    following holds:\\
    (a) With probability one every point $\xi$ in the set $\BX_{i}(\om^{(i)})$ has a violation
    probability less than or equal to that of the cost-minimal point $\xo_{i}(\om^{(i)})$:
    \begin{equation}\label{Equ:Monotonicity}
        \Pb\bigl[f_{i}(\xi,\den)>0\bigr]\leq
        \Pb\bigl[f_{i}(\xo_{i}(\om^{(i)}),\den)>0\bigr]
        \qquad\fa\xi\in\BX_{i}(\om^{(i)})\ef
    \end{equation}
    (b) The final solution $\xo_{i}(\tom^{(i)})$, where $\tom^{(i)}=
    \CA_{i}^{(K_{i},R_{i})}(\om_{i})$, violates all $R_{i}$ removed constraints.
\end{lemma}

\subsection{The Discarding Theorem}\label{Sec:DiscTheorem} 

For the sampling-and-discarding approach, the following result holds for the $\MCP$.

\vspace*{0.15cm}
\begin{theorem}[Discarding Theorem]\label{The:Discarding}
    Consider the problem \eqref{Equ:MCP} under Assumptions \ref{Ass:Convexity},
    \ref{Ass:Uniqueness}, \ref{Ass:Independence}, \ref{Ass:Feasibility}, \ref{Ass:SampleSize}, and
    either \ref{Ass:DiscViol} or \ref{Ass:Monotonicity}.
    Let $\CA_{i}^{(K_{i},R_{i})}$ be sample removal algorithms for each of its chance constraints
    $i=1,...,N$, some of which may be trivial (\ie $R_{i}=0$).
    Then it holds that
    \begin{equation}\label{Equ:Discarding}
	    \Pb^{K}\bigl[\V_{i}(\tom^{(1)},...,\tom^{(N)})>\ep_{i}\bigr]\leq
	    \displaystyle{R_{i}+\sri-1 \choose R_{i}}\B(\ep_{i};R_{i}+\sri-1,K_{i})\ec
    \end{equation}
    where $\sri$ denotes the support rank of chance constraint $i$ and $\B(\cdot;\cdot,\cdot)$ the
    beta distribution \eqref{Equ:BinDistr}.
\end{theorem}
\vspace*{0.15cm}

\begin{proof}
    Here the $\MCP$ case is reduced to the $\SCP$ case, for which a detailed proof is available
    in \cite[Sec.\,5.1]{CampGar:2011}.

    First, suppose that Assumption \ref{Ass:DiscViol} holds. The proof in \cite[Sec.\,5.1]{CampGar:2011} works analogously for an arbitrary chance constraint $i\in\BN_{1}^{N}$, given
    that an upper bound of the violation distribution is readily available from Theorem
    \ref{The:Sampling}.

    Second, suppose that Assumption \ref{Ass:Monotonicity} holds.
    In this case the proof in \cite[Sec.\,5.1]{CampGar:2011} can be applied directly to the $\SCP$
    which arises from the $\MCP$ if all chance constraints other than a particular
    $i\in\BN_{1}^{N}$ are omitted (and also $\BX$ is omitted).
    In particular, \eqref{Equ:Discarding} holds for the scenario solution of this $\SCP$, using
    Lemma \ref{The:Monotonicity}(b).
    Given that the chance constraint is monotonic and by virtue of Lemma \ref{The:Monotonicity}(a),
    \eqref{Equ:Discarding} also holds for any point in $\BX_{i}(\om^{(i)})$, in particular for the
    scenario solution of the $\MCP$.
\end{proof}
\vspace*{0.15cm}

The work of \cite{CampGar:2011} already provides an excellent account of the merits of the
sampling-and-discarding approach, which does not require a restatement here.
However, it should be emphasized that the scenario solution converges to the true solution
of the $\MCP$ as the number of discarded constraints increases, provided that the constraints are
removed by the optimal procedure of Example \ref{Exa:ConstrRem}(a).

\subsection{Explicit Bounds on the Sample-and-Discarding Pairs}\label{Sec:ExpDiscBounds} 

Similar to Section \ref{Sec:ScenSol}, explicit bounds on the sample size $K_{i}$ can also be derived
for the sampling-and-discarding approach, assuming the number of discarded constraints $R_{i}$ to be
fixed.
The technical details, using Chernoff bounds \cite{Cher:1952}, are worked out in \cite[Sec.\,5]{Cala:2010}.
The resulting explicit bound is indicated here for the sake of completeness,
\begin{equation}\label{Equ:ExpBoundSamp}
	K_{i}\geq\displaystyle\frac{2}{\ep_{i}}\log\biggl(\frac{1}{\theta_{i}}\biggr)+
	\frac{4}{\ep_{i}}\bigl(R_{i}+\sri-1\bigr)\ec
\end{equation}
where $\log(\cdot)$ denotes the natural logarithm.

Similarly, explicit bounds on the number of discarded constraints $R_{i}$ can be obtained, assuming
the sample size $K_{i}$ to be fixed:
\begin{equation}\label{Equ:ExpBoundDisc}
	R_{i}\leq\ep_{i}K_{i}-\sri+1-\displaystyle\sqrt{2\ep_{i}K_{i}
	\log\Bigl(\frac{(\ep_{i}K_{i})^{\sri-1}}{\theta_{i}}\Bigr)}\ef
\end{equation}
The technical details of this are found in \cite[Sec.\,4.3]{CampGar:2011}.

\section{Example: Minimal Diameter Cuboid}\label{Sec:Example}

The following academic example has been selected to highlight the strengths of the extensions to the
scenario approach presented in this paper.

\subsection{Problem Statement}

Let $\den$ be a random point in $\Delta\subset\BRn$, whose distribution and support set are unknown,
but sampled values can be obtained.
The objective in this example is to construct the Cartesian product $C$ of closed intervals in
$\BRn$ (`$n$-cuboid') of minimal $n$-diameter $W$, which is large enough to contain the point $\den$
in its $i$-th coordinate with probability $(1-\ep_{i})$. The setting is illustrated in Figure
\ref{Fig:ExampleRP}.

Let $z\in\BRn$ denote the center point of the cuboid and $t\in\BRn_{+}$ the interval widths in
each dimension, so that
\begin{equation}\label{Equ:CuboidDef}
	C=\bigl\{\xi\in\BRn\:\big|\:|\xi_{i}-z_{i}|\leq t_{i}/2\bigr\}\ef
\end{equation}
Then the corresponding stochastic program reads as follows:
\begin{subequations}\label{Equ:ExaNonConvProg}\begin{align}
    \min_{z\in\BRn,t\in\BRn_{+}}\quad&\|t\|_{2}\ec\\
    \st\quad&\Pr\bigl[z_{i}-t_{i}/2\leq\den_{i}\leq z_{i}+t_{i}/2\bigr]\geq(1-\ep_{i})
    \qquad\fa i\in\BN_{1}^{n}\ef
\end{align}\end{subequations}
Since the objective function is not linear, \eqref{Equ:ExaNonConvProg} has to be reformulated
(see Remark \ref{Rem:Generality}(a)) as
\begin{subequations}\label{Equ:ExaConvProg}\begin{align}
    &\min_{z\in\BRn,t\in\BRn_{+},T\in\BR}\:T\ec\hspace{8.9cm}\\
    &\quad\st\quad\|t\|_{2}\leq T\ec\\
    &\quad\pst\quad\Pr\Bigl[\max\bigl\{z_{i}-t_{i}/2-\den_{i},-z_{i}-t_{i}/2+\den_{i}\bigr\}\leq 0
    \Bigr]\geq(1-\ep_{i})\quad\fa i\in\BN_{1}^{n}\:.
\end{align}\end{subequations}

Note that \eqref{Equ:ExaConvProg} takes the form of a $\MCP$, for a $d=2n+1$ dimensional search
space and $N=n$ chance constraints: the objective function (\ref{Equ:ExaConvProg}a) is linear;
constraint (\ref{Equ:ExaConvProg}b) is deterministic and convex; and each of the chance constraints
in (\ref{Equ:ExaConvProg}c) is convex in $z,t$ for any fixed value of the uncertainty $\den\in
\Delta$.

Here each of the chance constraints $i=1,...,n$ depends on exactly two decision variables $z_{i}$
and $t_{i}$, which is a special case of involving $[z;t;T]\in\BR^{2n+1}$ (see Remark
\ref{Rem:Generality}(c)).
The convex and compact set $\BX$ is constructed from the positivity constraints on $t$, the
deterministic and convex constraint (\ref{Equ:ExaConvProg}b), and some artificial bounds assumed on
all variables.
Existence of a feasible solution, and hence Assumption \ref{Ass:Uniqueness}, holds automatically
from the problem setup.

\begin{figure}[t]
    \centering
    \begin{pspicture}(-50,-40)(50,40)
        \pspolygon[linestyle=none,fillstyle=solid,fillcolor=lightgray](-24.00,-10.97)(-24.00,22.00)
        (31.01,22.00)(31.01,-10.97)(-24.00,-10.97)
        \stl
        \psline[arrowsize=4.5pt]{->}(-48,0)(48,0)
        \rput[bl](44,-6){$\den_{1}$}
        \psline[arrowsize=4.5pt]{->}(0,-38)(0,38)
        \rput[bl](-6.5,34){$\den_{2}$}
        \psdots*[dotstyle=*,dotsize=3pt](9.38,-0.81)(11.06,-0.99)(16.43,13.27)(25.05,2.96)
        (5.07,8.88)(-11.35,5.81)(19.33,13.66)(-2.38,8.81)(21.91,-2.37)(1.41,2.30)
        (24.68,8.63)(-1.01,5.50)(-4.44,15.76)(18.70,3.75)(-7.24,-4.06)(15.61,6.31)
        (12.90,-0.73)(6.40,4.26)(4.18,9.60)(31.01,5.60)(3.81,5.56)(5.87,2.98)(-2.50,3.24)
        (-1.10,3.46)(1.47,-10.53)(27.12,6.04)(3.63,9.14)(0.28,-2.74)(2.99,-3.90)
        (15.99,7.22)(-6.71,-3.83)(15.01,-4.72)(-6.89,8.10)(6.54,0.60)(-12.43,10.46)
        (13.99,-2.71)(15.75,0.81)(0.31,5.26)(11.55,6.85)(9.82,6.43)(20.96,4.99)(9.44,2.81)
        (19.62,0.99)(6.69,6.50)(0.75,1.99)(7.58,6.19)(5.77,9.65)(17.86,-7.68)(7.37,13.37)
        (-1.41,0.72)(6.27,10.84)(21.29,12.06)(3.36,-3.63)(0.63,4.92)(19.00,22.00)(9.13,2.89)
        (3.94,6.53)(-0.26,5.72)(-8.55,10.58)(9.72,9.66)(4.45,-0.91)(11.57,1.72)(-8.29,5.78)
        (22.78,3.70)(13.45,17.16)(-9.52,6.28)(19.82,6.27)(1.43,-0.35)(11.38,-2.79)(10.45,8.12)
        (-1.82,2.98)(-24.00,7.37)(13.99,-0.32)(4.17,8.65)(1.31,-5.42)(12.68,12.14)(0.29,8.40)
        (15.51,4.57)(9.69,-2.03)(-3.09,1.94)(17.36,5.32)(13.31,10.27)(22.00,-1.37)(-1.98,10.73)
        (-2.01,18.37)(11.50,1.43)(15.46,6.43)(17.26,0.59)(8.07,-4.92)(5.36,11.64)(15.30,7.60)
        (29.97,1.35)(17.89,4.59)(8.95,2.81)(1.58,-3.59)(13.59,-10.97)(2.59,3.75)(10.67,1.60)
        (16.70,1.01)(18.34,-0.39)(14.58,-5.44)(23.41,1.29)(28.70,4.84)(-13.22,-1.51)(-11.89,6.34)
        (-2.32,2.58)(-14.34,3.47)(4.01,-3.42)(4.24,0.03)(10.80,6.13)(11.20,-0.10)(0.33,4.84)
        (-1.10,14.27)(16.87,8.13)(3.73,9.39)(-7.33,16.26)(8.12,10.10)(10.28,6.07)(3.40,14.13)
        (15.42,-0.70)(10.52,0.46)(5.62,14.48)(7.51,6.36)(13.12,3.78)(3.06,6.38)(-3.80,2.63)
        (6.26,2.48)(9.44,1.99)(13.70,1.12)(7.66,-0.90)(2.89,3.69)(14.67,1.02)(6.73,11.49)
        (17.03,5.22)(7.00,9.30)(7.93,3.27)(6.97,3.58)(-2.30,10.53)
        \thl
        \psline[linewidth=1.2pt,linestyle=dashed,dash=2pt 2pt](31.01,-35)(31.01,35)
        \psline[linewidth=1.2pt,linestyle=dashed,dash=2pt 2pt](-24.00,-35)(-24.00,35)
        \psline[linewidth=1.2pt,linestyle=dashed,dash=2pt 2pt](-45,22.00)(45,22.00)
        \psline[linewidth=1.2pt,linestyle=dashed,dash=2pt 2pt](-45,-10.97)(45,-10.97)
        \psline[arrowsize=5pt]{<->}(-24.00,-10.97)(31.01,22.00)
        \rput[bl](25,15){$T$}
    \end{pspicture}
    \caption{Illustration of the numerical example for $n=2$. The point $\den\in\Delta$ appears
    at random in $\BR^{2}$, according to some unknown distribution; the points drawn here are $166$
    \iid samples of $\den$.
    The objective is to construct the smallest product of two closed intervals (`2-cuboid'), drawn
    here as the shaded rectangle, such that the probability of failing to contain the realization of
    $\den$ is smaller than $\ep_{1}$ and $\ep_{2}$ in dimension $1$ and $2$, respectively.
    \label{Fig:ExampleRP}}
\end{figure}
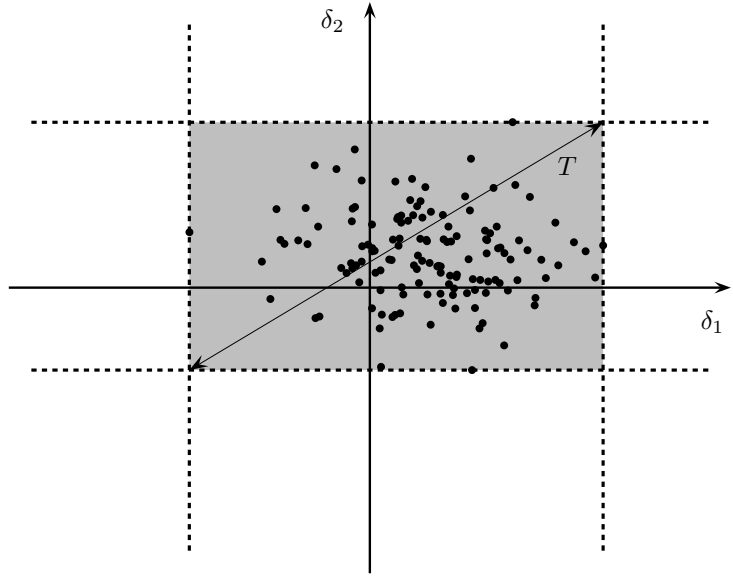

\subsection{Solution via Scenario Approach}

By inspection, each of the chance constraints $i=1,...,n$ has support rank $\sri=2$, because it only
involves the two variables $z_{i}$ and $t_{i}$.
For a fixed confidence level, \eg $\theta=10^{-6}$, the implicit sample sizes $K_{1},...,K_{n}$ in
\eqref{Equ:Sampling} can be computed for given values of $n$ and $\ep_{1},...,\ep_{n}\in (0,1)$ by
a bisection-based algorithm (see Section \ref{Sec:Chernoff}).
For simplicity, all $\ep_{1}=...=\ep_{n}$ are selected as equal, and since $\sr_{1}=...=\sr_{N}
=2$, the implicit sample sizes $K_{1}=...=K_{n}$ are also identical.

Given the outcomes of all multi-samples, the $\DSP$ is easily solved by the smallest $n$-cuboid that
contains all sampled points; see also Figure \ref{Fig:ExampleRP}. In other words, here the $\DSP$
has an analytic solution.

Table \ref{Tab:SampleSize}(a) summarizes the implicit sample sizes required for guaranteeing various
chance constraint levels $\ep_{i}$ in various dimensions $n$ (all with $\theta=10^{-6}$).
These sample sizes are also compared to those from the classic scenario approach, based on a
reformulation of \eqref{Equ:ExaConvProg} as an $\SCP$ according to the procedure outlined in Section
\ref{Sec:MCP}.

Observe from Table \ref{Tab:SampleSize} that the $\SCP$-based sample sizes are always larger than
those using the extensions of the $\MCP$ theory.
This effect increases, in particular, as the dimension $n$ of the optimization space grows larger.
The reason is that the support dimension of each chance constraint remains constant for all $n$,
whereas Helly's dimension grows as it equals to $n$.
The marginal growth of the sample size of the $\MCP$, despite the support rank $\sr_{i}=2$ being
constant, is the result of adjusting the confidence level $\theta$ to be (evenly) distributed among
the chance constraints, \ie $\theta_{i}=\theta/n$ for all $i=1,...,n$.

\begin{table}[H]
    \renewcommand\arraystretch{1.1}
    \centering
    \subfloat[b][$\MCP$-based Scenario Approach.]{
    \begin{tabular}{cr|rrrrrrr}
        \multicolumn{2}{c|}{\esp\esp\multirow{2}{1.4cm}{sample size $K_{i}$}}
        &\multicolumn{7}{c}{cuboid dimension $n=$}\\
        & & \parbox[l][4mm]{11mm}{\hfill 2\:\:\:}  & \parbox[l][4mm]{11mm}{\hfill 3\:\:\:} &
            \parbox[l][4mm]{11mm}{\hfill 5\:\:\:}  & \parbox[l][4mm]{11mm}{\hfill 10\:\:} &
            \parbox[l][4mm]{11mm}{\hfill 50\:\:} & \parbox[l][4mm]{11mm}{\hfill 100\:} &
            \parbox[l][4mm]{11mm}{\hfill 500\:}\\
        \hline
        \multirow{4}{*}{$\ep_{i}=$}
        &  1\%  & 1,734  & 1,777  & 1,831  & 1,903  & 2,072  & 2,144  & 2,311\\
        &  5\%  &   341  &   349  &   360  &   374  &   407  &   421  &   454\\
        & 10\%  &   166  &   170  &   176  &   182  &   199  &   205  &   221\\
        & 25\%  &    62  &    63  &    65  &    67  &    73  &    76  &    82
    \end{tabular}}\\
    \subfloat[b][$\SCP$-based Scenario Approach.]{
    \begin{tabular}{cr|rrrrrrr}
        \multicolumn{2}{c|}{\esp\esp\multirow{2}{1.4cm}{sample size $K_{i}$}}
        &\multicolumn{7}{c}{cuboid dimension $n=$}\\
        & & \parbox[l][4mm]{11mm}{\hfill 2\:\:\:}  & \parbox[l][4mm]{11mm}{\hfill 3\:\:\:} &
            \parbox[l][4mm]{11mm}{\hfill 5\:\:\:}  & \parbox[l][4mm]{11mm}{\hfill 10\:\:} &
            \parbox[l][4mm]{11mm}{\hfill 50\:\:} & \parbox[l][4mm]{11mm}{\hfill 100\:} &
            \parbox[l][4mm]{11mm}{\hfill 500\:}\\
        \hline
        \multirow{4}{*}{$\ep_{i}=$}
        &  1\%  & 2,334  & 2,722  & 3,431  & 5,020  & 15,588  & 27,535  & 115,786\\
        &  5\%  &   459  &   536  &   677  &   992  &  3,095  &  5,477  &  23,093\\
        & 10\%  &   225  &   263  &   332  &   488  &  1,533  &  2,719  &  11,506\\
        & 25\%  &    84  &    99  &   125  &   186  &    595  &  1,063  &   4,550
    \end{tabular}}
    \caption{Implicit sample sizes $K_{1}=...=K_{n}$ for the $\MCP$-based and the $\SCP$-based
    scenario approach, assuming a confidence level of $\theta=10^{-6}$, for varying problem
    dimension $n$ and chance constraint levels $\ep_{1}=...=\ep_{n}$.\label{Tab:SampleSize}}
    \renewcommand\arraystretch{1.0}
\end{table}

The larger sample size of the $\SCP$-based approach, as compared to the $\MCP$-based approach,
implies higher data requirements and higher computational efforts, but it also increases the
conservatism of the scenario solution.
The latter effect is quantified in Table \ref{Tab:RelObjValue}, showing the relative excess of the
(average) objective function values of the $\SCP$-based solutions over those of the $\MCP$-based
solutions.
Note that the objective values achieved by the $\SCP$-based approach are always higher than those
achieved by the $\MCP$-based approach, with the effect becoming increasingly significant as the
dimension $n$ of the decision space grows larger.

\begin{table}[H]
    \renewcommand\arraystretch{1.1}
    \centering
    \begin{tabular}{cr|rrrrrrr}
        \multicolumn{2}{c|}{\multirow{2}{1.6cm}{\,\, relative obj. value}}
        &\multicolumn{7}{c}{cuboid dimension $n=$}\\
        & & \parbox[l][4mm]{11mm}{\hfill 2\:\:\:}  & \parbox[l][4mm]{11mm}{\hfill 3\:\:\:} &
            \parbox[l][4mm]{11mm}{\hfill 5\:\:\:}  & \parbox[l][4mm]{11mm}{\hfill 10\:\:} &
            \parbox[l][4mm]{11mm}{\hfill 50\:\:} & \parbox[l][4mm]{11mm}{\hfill 100\:} &
            \parbox[l][4mm]{11mm}{\hfill 500\:}\\
        \hline
        \multirow{4}{*}{$\ep_{i}=$}
        &  1\%  &  2.4\%  & 3.4\%  &  5.0\% &  7.5\% & 14.8\% & 18.4\% & 26.9\%\\
        &  5\%  &  3.3\%  & 4.6\%  &  6.6\% &  9.8\% & 19.3\% & 23.8\% & 34.4\%\\
        & 10\%  &  3.9\%  & 5.4\%  &  7.6\% & 11.5\% & 22.2\% & 27.4\% & 39.3\%\\
        & 25\%  &  5.0\%  & 7.2\%  & 10.1\% & 15.1\% & 28.5\% & 34.7\% & 49.1\%
    \end{tabular}
    \caption{Objective function value of $\SCP$-based scenario solution as a percentage increase
    over the $\MCP$-based scenario solution, based on the sample sizes in Table \ref{Tab:SampleSize}
    and a multi-variate standard normal distribution for $\den$. Each of the indicated values
    represents an average over one million simulation runs.\label{Tab:RelObjValue}}
    \renewcommand\arraystretch{1.0}
\end{table}

\section*{Acknowledgement}

The research of L.~Fagiano has received funding from the European Union Seventh Framework
Programme FP7/2007-2013 under grant agreement number PIOF-GA-2009-252284, Marie Curie project
`Innovative Control, Identification and Estimation Methodologies for Sustainable Energy
Technologies'.

\begin{appendix}

\section{Probability Distributions}\label{Sec:ProbDistr} 

Several basic probability-related functions are used throughout this paper. 
The \emph{Binomial Distribution Function} \cite[p.\,26.1.20]{Abramowitz:1970}
\begin{equation}\label{Equ:BinDistr}
    \Phi(x;K,\ep):=\sum_{j=0}^{x}{K\choose j}\ep^{j}(1-\ep)^{K-j}
\end{equation}
expresses the probability of seeing at most $x\in\BN_{0}^{K}$ successes in $K\in\BN$ independent
Bernoulli trails, where the probability of success is $\ep\in(0,1)$ per trial.
The (real) \emph{Beta Function} \cite[p.\,6.2.1]{Abramowitz:1970}
\begin{equation}\label{Equ:BetaFct1}
	\B(a,b):=\int_{0}^{1}\xi^{a-1}(1-\xi)^{b-1}\di\ep
\end{equation}
is defined for any parameters $a,b\in\BR_{+}$, and $\xi\in(0,1)$; it also satisfies the identity 
\cite[p.\,6.2.2]{Abramowitz:1970}
\begin{equation}\label{Equ:BetaFct2}
	\B(a,b)=\B(b,a)=\frac{\Gamma(a)\Gamma(b)}{\Gamma(a+b)}\ec
\end{equation}
where $\Gamma:\BR_{+}\to\BR_{+}$ denotes the (real) \emph{Gamma Function} with $\Gamma(n+1)=n!$ for
any $n\in\BN_{0}^{\infty}$ \cite[p.\,6.1.5]{Abramowitz:1970}.
The corresponding \emph{Incomplete Beta Function} \cite[p.\,6.6.1]{Abramowitz:1970} is then given by
\begin{equation}\label{Equ:IncBetaFct1}
	\B(\ep;a,b):=\int_{0}^{\ep}\xi^{a-1}(1-\xi)^{b-1}\di\xi
	            =\int_{1-\ep}^{1}\xi^{b-1}(1-\xi)^{a-1}\di\xi\ec
\end{equation}
where the last equality follows by a simple substitution. An important identity is obtained from
\cite[pp.\,3.1.1,\,6.6.2,\,26.5.7]{Abramowitz:1970},
\begin{equation}\label{Equ:IncBetaFct2}
	\B(\ep;a,b)=\B(a,b)\sum_{j=a}^{a+b-1}{a+b-1 \choose j}\ep^{j}(1-\ep)^{a+b-1-j}\ec
\end{equation}
which can written more compactly by use of the binomial distribution \eqref{Equ:BinDistr}, see for
instance \cite[p.\,3437]{Cala:2010}:
\begin{equation}\label{Equ:IncBetaFct3}
	\B(\ep;a,b)=\frac{1}{b}{a+b-1\choose b}^{-1}\Phi(b-1;a+b-1,1-\ep)\ef
\end{equation}

\end{appendix}

\bibliographystyle{siam}
\bibliography{paperbib}

\end{document}